\theoremstyle{plain}
\newtheorem*{thma}{Theorem A}
\newtheorem*{thmb}{Theorem B}
\newtheorem*{thmc}{Theorem C}
\theoremstyle{definition}
\newtheorem{theorem}{Theorem}[section]
\newtheorem{prop}[theorem]{Proposition}
\newtheorem{lemma}[theorem]{Lemma}
\newtheorem{defn}[theorem]{Definition}
\newtheorem{rmk}[theorem]{Remark}
\newtheorem{exam}[theorem]{Example}
\newtheorem{subsec}[theorem]{}
\newtheorem{thm}[theorem]{Theorem}
\theoremstyle{remark}
   \newtheorem{ack}[theorem]{Acknowledgements}
 \theoremstyle{definition}
\newcommand{\C}{{\mathbb C}}
\newcommand{\Z}{{\mathbb{Z}}}
\newcommand{\R}{{\mathbb R}}
\newcommand{\Hom}{\mathit{Hom}}
\newcommand{\upi}{\underline{\pi}}
\newcommand\DD{{\mathcal D}}
\newcommand\FF{{\mathcal F}}
\newcommand\GG{{\mathcal G}}
\newcommand\LL{{\mathcal L}}
\newcommand\MM{{\mathcal M}}
\newcommand\OO{{\mathcal O}}
\newcommand\PP{{\mathcal P}}
\newcommand\PMF{{\PP\kern-2pt\MM\FF}}
\newcommand\PML{{\PP\kern-2pt\MM\LL}}
\newcommand\Mod{\operatorname{Mod}}
\newcommand\tr{\operatorname{tr}}
\newcommand{\ssm}{\setminus}
\newcommand{\T}{{\mathbb T}}
\newcommand{\fsubd}{\mathrel{{\scriptstyle\searrow}\kern-1ex^d\kern0.5ex}}
\newcommand{\bsubd}{\mathrel{{\scriptstyle\swarrow}\kern-1.6ex^d\kern0.8ex}}
\newcommand{\fsubeq}{\mathrel{\raise-.7ex\hbox{$\overset{\searrow}{=}$}}}
\newcommand{\bsubeq}{\mathrel{\raise-.7ex\hbox{$\overset{\swarrow}{=}$}}}
\newcommand{\tsh}[1]{\left\{\kern-.9ex\left\{#1\right\}\kern-.9ex\right\}}
\newenvironment{myeq}[1][]
{\stepcounter{theorem}\begin{equation}\tag{\thetheorem}{#1}}
	{\end{equation}}
\newenvironment{mysubsection}[2][]
{\begin{subsec}\begin{upshape}\begin{bfseries}{#2.}
			\end{bfseries}{#1}}
		{\end{upshape}\end{subsec}}
\newcommand{\bZp}{\langle \Z/p \rangle}
\newcommand{\res}{\mathit{res}}
\newcommand{\uA}{\underline{A}}
\newcommand{\uM}{\underline{M}}
\newcommand{\up}[1]{\;\textup{#1}\;}
\newtheorem{notation}[theorem]{Notation}
\newcommand{\cp}{C_{p}}
\newcommand{\alpabs}[1]{\vert\alpha^{#1}\vert}
\newcommand{\uH}{\ushort{H}}
\newcommand{\uZ}{\underline{\mathbb{Z}}}
\numberwithin{equation}{section}
\tikzset{notestyleraw/.append style={align=justify}}
\newcommand{\smas}{\wedge}
\newcommand{\bigwedg}{\bigvee}
\newcommand{\cofseq}{cofibre sequence}
\title[Equivariant homology decompositions]{Equivariant homology decompositions  for cyclic group actions on definite 4-manifolds}
\author{Samik Basu, Pinka Dey, Aparajita Karmakar}
\email{samik.basu2@gmail.com; samikbasu@isical.ac.in}
\address{Stat-Math Unit,
	Indian Statistical Institute,
	B. T. Road, Kolkata-700108, India.}
\email{pinkadey11@gmail.com;}
\address{Stat-Math Unit,
	Indian Statistical Institute,
	B. T. Road, Kolkata-700108, India.}
\email{aparajitakarmakar@gmail.com;}
\address{Stat-Math Unit,
	Indian Statistical Institute,
	B. T. Road, Kolkata-700108, India.}
\subjclass[2020]{Primary: 55N91, 57S17; Secondary: 55P91.}
\keywords{4-manifolds, equivariant homotopy, equivariant homology.}
\begin{document}
	\maketitle
	\begin{abstract}
In this paper, we study the equivariant homotopy type of a connected sum of linear actions on complex projective planes defined by Hambleton and Tanase. These actions are constructed for cyclic groups of odd order. We construct cellular filtrations on the connected sum using spheres inside unitary representations. A judicious choice of filtration implies a splitting on equivariant homology for general cyclic groups under a divisibility hypothesis, and in all cases for those of prime power order. 		
	\end{abstract}	
	\section{Introduction}
	
	Simply connected $4$-manifolds form an important category of spaces from the point of view of both topologists and geometers. Their homotopy type is determined by the intersection form. The ones with positive definite intersection form are homotopy equivalent to a connected sum of copies of $\C P^2$. 
This paper studies the equivariant homotopy type of certain cyclic group actions on these $4$-manifolds defined in \cite{HT04}, and splitting results for the equivariant homology with constant coefficients.  

Recall that a simply connected $4$-manifold $M$ possesses a CW-complex structure whose $2$-skeleton is a wedge of spheres, and outside the $2$-skeleton, there is a single $4$-cell. It follows  that the homology is torsion-free, and non-zero in only three degrees $0$, $2$ and $4$, with $H_0(M)=\Z$ and $H_4(M)=\Z$. If $k$ is the second Betti number of $M$, in the stable homotopy category, we obtain the decomposition $H\Z \wedge M_+ \simeq H\Z \bigvee_{i=1}^k \Sigma^2 H\Z \bigvee \Sigma^4 H\Z$.   

Homology decompositions akin to the above in equivariant stable homotopy category have been studied for complex projective spaces in \cite{Lew88} over the group $C_p$. The splitting therein is obtained with Burnside ring coefficients, which is denoted by $\uA$. However, Lewis \cite[Remark 2.2]{Lew92} shows that these decompositions are not likely if the group contains $C_{p^2}$ or $C_p\times C_p$. On the other hand, for cyclic groups of square free order these splittings do exist (\cite{BG19}, \cite{BG20}). There are generalizations of these results for the group $C_p$ for even cell complexes (\cite{Fer99}, \cite{FL04}), and with constant $\Z/p$-coefficients \cite{BG21a}. Such results have also been extensively studied for the group $C_2$ with constant $\Z/2$-coefficients (\cite{May20}, \cite{Kro10}, \cite{Haz21}). In this paper, we prove decomposition results for cyclic group actions on a connected sum of copies of $\C P^2$ with constant $\Z$-coefficients.     

The homology decompositions for $X$ are usually proved by building up a cellular filtration of $X$, and then showing that after smashing with the spectrum $H\uZ$, the connecting maps are all trivial. For this purpose, the cells are taken of the form $D(V)$, a disk in a unitary $G$-representation $V$, so that the filtration quotients are wedges of $S^V$, the one-point compactification of $V$. There is a different concept of $G$-CW-complexes with cells of the type $G/H\times \DD^n$ for  subgroups $H\leq G$, but they are not useful from the point of view of homology decompositions. 

In this paper, the $G$-manifolds $X(\T)$ are defined using {\it admissible weighted trees} $\T$ \cite{HT04}, which are directed rooted trees with $G$-action, with each vertex carrying a weight comprising $3$ integers $a, b, m$ of $\gcd$ $1$ (see figure below). The underlying manifolds $\C P^2(a,b;m)$ are copies of $\C P^2$, which have an action of the group $C_m$, by identifying $\C P^2$ as the space of complex lines in a three-dimensional complex representation of $C_m$. The numbers $a,b$ are used to describe the irreducible representations therein.  We fix $\lambda$ as a complex $1$-dimensional representation where $C_m$ acts via $m^{th}$-roots of unity, and in these terms $\C P^2(a,b;m)=P(\lambda^a+\lambda^b+1_\C)$. The admissible part of the definition of the tree allows us to construct the equivariant connected sum in the figure. 
 \addtocounter{equation}{1}
{\scriptsize
		 \begin{figure}[htp]
		 	\begin{tikzpicture}
		 		[
		 		scale=.35,
		 		dot/.style = {circle, gray, minimum size=#1,
		 			inner sep=0pt, outer sep=0pt}, 
		 		dot/.default = 3pt  
		 		] 
		 		
		 		\node [dot, label=left:\up{$v_0$}] (a0) at (3,6)  {};  
		 		\node [dot, label=right:\up{$w(v_0)=(a_0,b_0;m)$}] (a0) at (3,6)  {};
		 		\draw[black] (3,6) circle[radius=3pt];
		 		\node [dot, label=left:\up{$v_1$}](a1) at (1,4)  {};
		 		\node [dot, label=right:\up{$w(v_1)=(a_1,b_1;m)$}](a1) at (1,4)  {};
		 		\draw[black] (1,4) circle[radius=3pt];
		 		\node [dot, label=left:\up{$v_2$\;\;\;}] (a2) at (3,2) {}; 
		 		\node [dot, label=right:\up{$w(v_2)=(a_2,b_2;m)$\;\;\;}] (a2) at (3,2) {};  
		 		\draw[black] (3,2) circle[radius=3pt];
		 		\node [dot, label=left:\up{$v_3$}] (a3) at (1,0) {}; 
		 		\node [dot, label=right:\up{$w(v_3)=(a_3,b_3;m)$}] (a3) at (1,0) {}; 
		 		\draw[black] (1,0) circle[radius=3pt];

		 		\draw[->](a0) --  (a1); 
		 		\draw[->] (a1) -- (a2);  
		 		\draw[->] (a2) -- (a3);
		 		\node[dot,label=left:\up{\normalsize $\T$}](a4) at (3.5,-2){ };  
		 	\end{tikzpicture}
		 	\hspace*{1.5cm}
		 	\begin{tikzpicture}  
		 		[
		 		scale=.35,
		 		dot/.style = {circle, gray, minimum size=#1,
		 			inner sep=0pt, outer sep=0pt}, 
		 		dot/.default = 3pt  
		 		]  
		 		
		 		\node [dot, label=left:\up{\normalsize $X(\T)$}] (a0) at (4,10)  {};

		 		\node [dot](a1) at (4.5,10)  {};
		 		
		 		\node [dot] (a2) at (6.5,10) {};  
		 		
		 		\node [dot, label=right:\up{ $\C P^2(a_0,b_0;m)\# \C P^2(a_1,b_1;m)$}] (a3) at (7,10.5) {}; 
		 		
		 		\node [dot, label=right:\up{ $\#\C P^2(a_2,b_2;m)\#\C P^2(a_3,b_3;m)$}] (a3) at (7,9.3) {}; 
		 		\node[dot](a4) at (7,6) { };
		 		
		 		\draw[double distance=2pt] (a1) -- (a2);  
		 	\end{tikzpicture}
		 	\label{fig-tree}
		 	\caption{ } 
		 \end{figure}
		 }

For a cyclic group $C_m$ of odd order, we prove two decomposition results (see Theorem \ref{thm:gen} and Theorem \ref{thm:one-zero})
\begin{thma}
a) If $ \T $  is an admissible weighted tree such that all fixed vertices $ v$ with weight $w(v)= (a_v,b_v;m_v) $ satisfy  $\gcd(a_v-b_v,m_v)=1$, then  
\[
		H\uZ\smas X(\T)_+\simeq
			H\uZ\bigwedg H\uZ \smas S^{\lambda ^{a_0}+ \lambda^{b_0}} \underset{\T_0}{\bigvee} H\uZ\wedge S^\lambda \underset{[\mu]\in\T_d/\GG, d\neq m}{\bigvee} { H\uZ \smas \GG/C_d}_+\smas S^{\lambda^{a_\mu-b_\mu}} 
		\]
where $w(v_0)= (a_0,b_0;m) $ is the weight of the root vertex. \\
b) Let $ \T $ be an admissible weighted tree such that for the root vertex $ v_0 $ with  weight $ w(v_0)=(a_0,b_0;m) $, one of $ a_0 $ or $ b_0 $ is zero.  Then, 
\[
H\uZ\smas X(\T)_+\simeq
			H\uZ\bigwedg H\uZ \smas S^{\lambda +2}\underset{[\mu]\in \T_d/\GG}{\bigvee} { H\uZ \smas \GG/C_d}_+\smas {S^{\lambda^{a_\mu-b_\mu}}} .
		\]

\end{thma}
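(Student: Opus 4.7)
The plan is to build an equivariant cellular filtration of $X(\T)$ whose successive subquotients are representation spheres $S^V$ and induced spheres $(\GG/C_d)_+\smas S^V$, and then prove that after smashing with $H\uZ$ every connecting map in this filtration becomes null. Once the connecting maps vanish, the filtration splits stably as the claimed wedge of $H\uZ$-modules.

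First I would construct the filtration vertex-by-vertex, traversing the directed tree $\T$ from the root outward. The root $v_0$ of weight $(a_0,b_0;m)$ supplies the natural projective filtration of its underlying manifold,
\[
\ast \;\subset\; P(\lambda^{a_0}+\lambda^{b_0}) \;\subset\; P(\lambda^{a_0}+\lambda^{b_0}+1_\C) \;=\; \C P^2(a_0,b_0;m),
\]
whose successive cofibres are the $2$-sphere $S^{\lambda^{a_0-b_0}}$ and the $4$-cell $S^{\lambda^{a_0}+\lambda^{b_0}}$. For each subsequent vertex $v$ of weight $(a_v,b_v;m_v)$, the admissible equivariant connected sum of \cite{HT04} is performed on an equivariant tubular neighborhood of a $C_{m_v}$-fixed point; the effect on cells is to cancel the previously added top $4$-cell and to glue in the corresponding projective filtration of $\C P^2(a_v,b_v;m_v)$. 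Organising by $\GG$-orbits, each fixed vertex $v \in \T_0$ contributes a $2$-sphere $S^{\lambda^{a_v-b_v}}$, while each non-fixed orbit $[\mu] \in \T_d/\GG$ with stabiliser $C_d \neq C_m$ contributes an induced $2$-sphere $(\GG/C_d)_+\smas S^{\lambda^{a_\mu-b_\mu}}$; only the root's $4$-cell $S^{\lambda^{a_0}+\lambda^{b_0}}$ survives the connected-sum cancellation.

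The main step is to show that every attaching map becomes null after smashing with $H\uZ$. The $2$-cells are attached at fixed points of their local stabilisers, so their attaching maps vanish trivially in the base. The delicate map is the attaching map of the top $4$-cell onto the $2$-skeleton; by the relevant cofibre sequences this reduces to a vanishing in the $C_m$-equivariant $H\uZ$-cohomology of a wedge of $2$-spheres, which one reads off from the $RO(C_m)$-graded Bredon homology $\underline{H}_\star^{C_m}(\ast;\uZ)$. The vanishing holds precisely when each $2$-sphere $S^{\lambda^{a_v-b_v}}$ in the $2$-skeleton is built from a faithful character of its stabiliser: for fixed vertices $v$ this is exactly the hypothesis $\gcd(a_v-b_v,m_v)=1$ of part (a), while for non-fixed orbits, faithfulness on $C_d$ is automatic from admissibility of $\T$. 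The identification $H\uZ\smas S^{\lambda^{a_v-b_v}} \simeq H\uZ\smas S^\lambda$ for a fixed vertex then follows from the invariance of the $H\uZ$-module structure under Galois twists of a faithful $1$-dimensional character.

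For part (b), the assumption that one of $a_0,b_0$ is zero means the root's middle piece $P(\lambda^{a_0}+\lambda^{b_0})\cong S^{\lambda^k}$ already has non-empty fixed set, so the root's top cell identifies cleanly as $S^{\lambda+2}$; the vanishing of the $4$-cell attaching map then proceeds without the divisibility hypothesis at fixed vertices, since the trivial summand in the root representation provides enough additional room in the $RO(C_m)$-graded calculation to kill the obstruction. The principal obstacle throughout is exactly this $RO(C_m)$-graded vanishing of the $4$-cell attaching map, which is where the weight and admissibility hypotheses are truly needed; the remaining work is careful bookkeeping of the cellular filtration.
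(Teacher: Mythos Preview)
Your overall strategy---build a cellular filtration by representation spheres and show the connecting maps vanish after smashing with $H\uZ$---matches the paper's, and you correctly identify the $4$-cell attaching map as the place where the hypotheses enter. The paper organises this slightly differently: rather than first assembling a $2$-skeleton and then attaching a single $4$-cell, it proceeds by induction on the level of $\T$, using the connected-sum cofibre sequences $S^{\lambda^{a_i-b_i}}\to X(\T(L)+\OO_i)\to X(\T(L))$ (or their induced analogues $\GG/{C_{m_i}}_+\wedge S^{\lambda^{a_i-b_i}}\to X(\T(L)+\OO_i)_+\to X(\T(L))_+$) one orbit at a time. This sidesteps having to justify that the $2$-skeleton is literally a wedge, and makes the relevant obstruction groups transparent at each step.

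There is, however, a genuine gap in your treatment of non-fixed orbits. You assert that for $[\mu]\in\T_d/\GG$ with $d\neq m$, faithfulness of $\lambda^{a_\mu-b_\mu}$ on $C_d$ is ``automatic from admissibility''. This is true only for vertices $\mu$ immediately below a vertex with larger stabiliser (where Definition~\ref{adm-wt-tree}(7)(b) forces $b_\mu=0$ and $\gcd(a_\mu,m_\mu)=1$), but fails in general for vertices further down: for instance with $m=9$, a chain $(1,3;9)\to(1,0;3)\to(1,-1;3)\to(1,-2;3)$ is admissible and satisfies the hypothesis of part~(a), yet the last vertex has $a_\mu-b_\mu=3$, not coprime to $3$. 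The paper's argument for vanishing at such summands is different: one shows, by walking up the tree along fixed vertices and repeatedly applying condition~(7)(a) together with the hypothesis $\gcd(a_w-b_w,m)=1$, that the stabiliser order $m_\mu$ must divide $a_0$ or $b_0$. This makes one of $\lambda^{a_0},\lambda^{b_0}$ trivial on $C_{m_\mu}$, so that $\alpha=\lambda^{a_0}+\lambda^{b_0}-\lambda^{a_\mu-b_\mu}-1$ restricted to $C_{m_\mu}$ never has a fixed-point dimension below $-1$, and Theorem~C applies. This combinatorial step is where the hypothesis of part~(a) is really used for non-fixed orbits, and it is missing from your sketch.
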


In the above, the notation $\T_0$ stands for the $C_m$-fixed points of $\T$, and $\T_d$ refers to the vertices whose stabilizer is $C_d$ for $d\mid m$. The results in Theorem A depend on a hypothesis on the weights at vertices which are fixed under the $G$-action. We further prove that these hypotheses may be removed when the group is of prime power order. (See Theorem \ref{thm:cp-case} and Theorem \ref{cpnthm}) 
\begin{thmb}
a) Let $\T $ be an admissible weighted  $C_p$-equivariant tree such that $ p\nmid a_0, b_0$ but $ p\mid a_v-b_v $ for some fixed vertex $ v $.  	Then,  
		\begin{equation*}
				H\uZ\smas X(\T)_+ \simeq 
				H\uZ\bigwedg H\uZ \smas S^{{\lambda}+ 2}\bigwedg\limits_{\phi(\T)+1} H\uZ \smas S^{\lambda}\bigwedg\limits_{\psi(\T)-1}H\uZ \smas {S^2}\bigwedg\limits_{ \T_e/\GG} { H\uZ \smas \GG/e}_+\smas {S^2}. 
		\end{equation*}
b) Let  $\T $ be an admissible weighted  $C_{p^n} $-equivariant  tree.  Suppose $ \tau>0 $ is the maximum power of $ p $ that divides $ a_v-b_v $ among the fixed vertices $v$ and  $ p\nmid a_0, b_0 $. Then  
		\begin{equation*}
			H\uZ\smas X(\T)_+\simeq	H\uZ\bigwedg H\uZ \smas S^{\lambda+\lambda^{p^{\tau}}}\bigwedg\limits_{W_{\T}(i)} H\uZ \smas S^{\lambda^{i}}\bigwedg\limits_{{[\mu]\in \T_d/\GG,~d\neq p^n}} H\uZ \smas {\GG/C_d}_+\smas S^{\lambda^{a_\mu-b_\mu}}. 
		\end{equation*}		
\end{thmb}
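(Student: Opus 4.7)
The plan is to extend the cellular filtration technique used for Theorem A, refining the filtration at the fixed vertices where the divisibility hypothesis $\gcd(a_v - b_v, m) = 1$ fails. The unifying principle is that for $G = C_{p^n}$, if $p^\tau$ is the exact power of $p$ dividing $a_v - b_v$, then $\lambda^{a_v - b_v}$ is the inflation of a faithful $1$-dimensional representation of $C_{p^{n-\tau}}$, so $S^{\lambda^{a_v - b_v}}$ is a ``reduced'' sphere over $C_{p^n}$ that can be broken into simpler summands. In part (a) with $G = C_p$, the hypothesis $p \mid a_v - b_v$ at some fixed vertex makes $\lambda^{a_v - b_v}$ restrict to the trivial $C_p$-representation, so the expected sphere $S^{\lambda^{a_v - b_v}}$ becomes $S^2$; this explains the appearance of $H\uZ \wedge S^2$ summands in the decomposition.

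\textbf{Implementation.} For both parts I would construct an equivariant cellular filtration of $X(\T)$ by traversing the tree from the root and attaching one equivariant connected summand $\C P^2(a_v, b_v; p^n)$ per vertex-orbit, choosing the cellular structure of each summand so that its graded pieces are the representation spheres appearing in the target decomposition. At the root $v_0$, the hypothesis $p \nmid a_0, b_0$ forbids any reduction, so $\C P^2(a_0, b_0; p^n)$ contributes $H\uZ$, $H\uZ \wedge S^\lambda$, and a top summand $H\uZ \wedge S^{\lambda + 2}$ in part (a) or $H\uZ \wedge S^{\lambda + \lambda^{p^\tau}}$ in part (b). At each interior fixed vertex $v$ with $p$-valuation $\tau_v$ of $a_v - b_v$, the reduction $\lambda^{a_v - b_v} \to \lambda^{p^{\tau_v}}$ produces wedge summands enumerated by the invariants $\phi(\T) + 1$, $\psi(\T) - 1$, and $W_\T(i)$. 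Non-fixed orbits contribute free cells $H\uZ \wedge (\GG/C_d)_+ \wedge S^{\lambda^{a_\mu - b_\mu}}$ exactly as in Theorem A, because free orbits are unaffected by the fixed-point reductions.

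\textbf{Main obstacle.} The core technical step is to verify that, after smashing with $H\uZ$, every attaching map in this refined filtration admits an $H\uZ$-module section. The obstructions live in $RO(C_{p^n})$-graded Bredon cohomology groups $\underline{H}^{\bstar}(S^V; \uZ)$ for the representations $V$ produced by the filtration. Lewis's obstructions \cite{Lew92} show that such splittings can fail for general cyclic groups (already whenever $C_{p^2}$ or $C_p \times C_p$ occurs in the target decomposition over $\uA$), so the restriction to prime power order groups is essential and exploits the chain structure of the subgroup lattice of $C_{p^n}$, which makes the Mackey functor $\uZ$ admit a tractable resolution. I would reduce the required vanishing statements to a finite collection of Bredon cohomology computations extending those of \cite{BG21a, Lew88}, and then assemble the resulting splittings inductively along the filtration to produce the stated wedge decomposition.
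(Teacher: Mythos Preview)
Your proposal has a genuine gap: the claim that the root vertex contributes the top summand $H\uZ\wedge S^{\lambda+2}$ (resp.\ $H\uZ\wedge S^{\lambda+\lambda^{p^\tau}}$) is not correct under the hypothesis $p\nmid a_0,b_0$. If additionally $p\nmid a_0-b_0$, then every cellular filtration of $\C P^2(a_0,b_0;p^n)$ yields $S^{2\lambda}$ as the top cell after smashing with $H\uZ$; none of the three orderings produces an $S^{\lambda+2}$ or $S^{\lambda+\lambda^{p^\tau}}$. Once the top cell is $S^{2\lambda}$, attaching a later fixed vertex $v$ with $p\mid a_v-b_v$ introduces $S^{\lambda^{a_v-b_v}}\simeq S^2$ on the left of the cofibre sequence, and the relevant connecting map lands in $\pi^{\GG}_{2\lambda-3}(H\uZ)$, which is nonzero. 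The paper's Example~5.1 makes exactly this point: with the naive filtration the connecting map is provably nontrivial, so your proposed inductive splitting fails at that step, and no amount of Bredon cohomology computation will rescue it because the obstruction group is genuinely nonzero.

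The paper circumvents this by a \emph{reorientation} of the tree (Example~5.2): one locates the fixed vertex $v_\ell$ closest to $v_0$ for which $p^\tau\mid a_\ell-b_\ell$, and re-roots the tree at $v_\ell$, with new weight $(a_\ell-b_\ell,-b_\ell;p^n)$. Now one of the rotation numbers at the new root \emph{is} divisible by $p^\tau$, so the filtration of Example~5.1 gives the top cell $S^{\lambda+\lambda^{p^\tau}}$, and the decomposition along the path $\Gamma$ follows from (the argument of) Theorem~\ref{thm:one-zero}. A combinatorial claim matches the new weights $a_i+b_i$ along $\Gamma$ with the old $a_j-b_j$, accounting for the shift in $W_{\T}(i)$ (and the $\pm1$ in $\phi(\T),\psi(\T)$). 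Finally, Lemma~\ref{lem:cpn-lem} bounds the stabilizers of non-fixed vertices by $C_{p^\tau}$, which is what forces the remaining connecting maps to land in groups that Theorem~C shows are zero. Your plan is missing this reorientation step and the accompanying combinatorial bookkeeping; without them the argument does not go through.
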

In the statements of Theorem B, we observe that the complementary cases are proved in Theorem A. The notations $\phi(\T)$, $\psi(\T)$ and $W_\T(i)$ are clarified later in the document. The techniques used in the proof are the cellular filtration of the manifolds $X(\T)$, and the following result about the $RO(C_m)$-graded homotopy groups of $H\uZ$. (see Theorem \ref{thm:BG-C_m}) 	
\begin{thmc}
		Let $ \alpha\in RO(C_m) $ be such that $ |\alpha| $ is odd, and $|\alpha^H|>-1$ implies $|\alpha^K|\ge -1 $ for all subgroups $ K\supseteq H $. Then $ \upi_{\alpha}^{\GG}(H\uZ) =0$.
\end{thmc}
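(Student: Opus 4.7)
The proof proceeds by induction on the order $m$ of $\GG = C_m$. The base case $m=1$ is immediate: $\upi_\alpha(H\uZ) = \pi_{|\alpha|}(H\Z)$, which vanishes because $|\alpha|$ is odd and hence nonzero.

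For the inductive step, observe that the Mackey functor $\upi_\alpha^\GG(H\uZ)$ has value $\pi_{\alpha|_H}^H(H\uZ)$ at $\GG/H$. The dimension hypothesis on $\alpha$ restricts to the corresponding hypothesis on $\alpha|_H$, since the subgroups of $H$ form a sublattice of those of $\GG$ with unchanged fixed-point dimensions, and $|\alpha|$ is unaltered. Hence by inductive hypothesis, $\pi_{\alpha|_H}^H(H\uZ)=0$ for all proper $H<\GG$, and it suffices to show $\pi_\alpha^\GG(H\uZ)=0$.

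To this end, I would smash the isotropy separation cofiber sequence $E\cP_+ \to S^0 \to \EP$ (where $\cP$ is the family of proper subgroups of $\GG$) with $H\uZ$ and pass to the long exact sequence in $\pi_\alpha^\GG$. It then suffices to verify that the two outer terms vanish. For the first term $\pi_\alpha^\GG(E\cP_+ \smas H\uZ)$, the skeletal filtration of $E\cP$ has cells of the form $\GG/H_+ \smas D^n$ for $H\in\cP$, yielding a spectral sequence whose $E_1$-page is built from $\pi_{\alpha-n}^H(H\uZ)$ for proper $H$ and $n\geq 0$. For even $n$, the shifted virtual representation $\alpha-n$ retains odd total dimension and still satisfies the dimension hypothesis, so these contributions vanish by the inductive hypothesis. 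For odd $n$, $|\alpha|-n$ is even, and one uses the concentration of $\pi_*(H\Z)$ in degree $0$ together with a chain-level analysis keyed to the dimension function to eliminate the remaining contributions.

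For the third term $\pi_\alpha^\GG(\EP \smas H\uZ)$, passage to geometric fixed points identifies this group with a shift of $\pi_{|\alpha^\GG|}(\Phi^\GG H\uZ)$. The hypothesis forces either $|\alpha^\GG|\leq -1$ (when no subgroup $H$ has $|\alpha^H|\geq 0$, since $\GG$ is itself such a subgroup) or $|\alpha^\GG|\geq -1$ (otherwise). Combined with $|\alpha|$ odd and the known structure of $\Phi^\GG(H\uZ)$ for cyclic $\GG$, these constraints force the target to vanish.

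The main obstacle is the spectral-sequence bookkeeping for the first term when $n$ is odd, where the inductive hypothesis does not directly apply and cellular differentials must be analyzed in detail against the dimension hypothesis on $\alpha$. A secondary obstacle is pinning down $\Phi^\GG(H\uZ)$ precisely enough to conclude vanishing in the relevant integer degree.
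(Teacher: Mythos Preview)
Your approach via isotropy separation is genuinely different from the paper's, but it contains a real gap. The paper's proof is far shorter: it cites \cite{BG21} for the prime-power case $\GG=C_{p^n}$, and then for general $m$ uses the cohomological property of $\uZ$-modules (Remark~\ref{cohomological}) that $\tr^H_K\res^H_K$ is multiplication by $[H:K]$. If $m$ has two distinct prime factors $p,q$, then for $x\in\pi_\alpha^\GG(H\uZ)$ one gets $px=\tr^\GG_{C_{m/p}}\res^\GG_{C_{m/p}}(x)=0$ and $qx=0$ by the inductive hypothesis applied to the proper subgroups $C_{m/p}$ and $C_{m/q}$; hence $x=0$. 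No isotropy separation, spectral sequences, or geometric fixed points are needed.

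The concrete gap in your argument is the claim that for even $n>0$ the shifted element $\alpha-n$ ``still satisfies the dimension hypothesis.'' It does not. The hypothesis reads: $|\alpha^H|>-1\implies|\alpha^K|\ge-1$ for $K\supseteq H$. After subtracting the trivial representation $n$, you would need $|\alpha^H|>n-1\implies|\alpha^K|\ge n-1$, which is strictly stronger and need not hold (e.g.\ $|\alpha^H|=2$, $|\alpha^K|=0$ satisfies the original hypothesis but fails the shifted one for $n=2$). So the inductive hypothesis does not apply to the even-$n$ terms of your spectral sequence any more than to the odd-$n$ ones, and the $E\cP_+$ term is not controlled by your argument. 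The $\EP$ term is also more delicate than you indicate: $\Phi^\GG(H\uZ)$ for composite cyclic $\GG$ is not simply an Eilenberg--Mac Lane spectrum in a single degree, so the vanishing you assert there requires substantially more work. The transfer--restriction trick in the paper sidesteps all of this.
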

	
	\begin{mysubsection}{Organization}
In Section \ref{sec:2}, we recall the equivariant connected sum construction leading to the definition of tree manifolds associated to admissible weighted trees in \cite{HT04}. In Section \ref{sec:3}, we introduce the facts from equivariant stable homotopy theory required in the subsequent sections and prove Theorem C. In Section \ref{sec:4}, we prove the homology decompositions over the group $C_m$ (Theorem A), and finally in Section \ref{sec:5}, we prove the results over prime power groups (Theorem B). 
	\end{mysubsection}
	
	\begin{notation}
		Throughout this paper, $\GG$ denotes the cyclic group of order $m$, where $m$ is odd, and $g$ denotes a fixed generator of $\GG$. Whenever the notation $ p $ is used for a prime, it is supposed to be odd. For an orthogonal $\GG$-representation $V$, $S(V)$ denotes the unit sphere, $D(V)$ the unit disk, and $S^V$ the one-point compactification $\cong D(V)/S(V)$. The notation $ 1_\C $ is used for the trivial complex representation and $ 1 $ for the real trivial representation. 
	\end{notation}	
	
	\begin{ack}
The first author would like to thank Surojit Ghosh for some helpful conversations in the proof of Theorem \ref{thm:BG-C_m}. The research of the first author was supported by the SERB MATRICS grant 2018/000845. The research of the second author was supported by the NBHM grant no. 16(21)/2020/11. 
\end{ack}
	
	\section{Tree manifolds}\label{sec:2}
	In this section we discuss the construction of connected sum of $G$-manifolds focussing on the special case of a connected sum of complex projective planes in the case $G=\GG$. In the latter case, the construction is governed through a system of explicit combinatorial data expressed as admissible weighted trees (see \cite{HT04} for details).  We refer to these as {\it tree manifolds}.

	\begin{mysubsection}{Equivariant connected sums}	
		Let $X$ and $Y$ be two smooth $G$-manifolds of the same dimension $n$. The equivariant connected sum $X\# Y$ depends on the following data \\
		1) Points $x\in X^G$, $y\in Y^G$.\\
		2) An orientation reversing isomorphism of real $G$-representations $\varphi : T_x X \to T_y Y$. \\
		Given the data above, one may conjugate $\varphi$ with the exponential map to obtain a diffeomorphism of punctured disks near $x$ and $y$. This identification is then performed on $X\ssm \{x\} \sqcup Y \ssm \{y\}$ to obtain the equivariant connected sum $X\# Y$. One readily observes the following homotopy cofibration sequences 
		\begin{myeq} \label{conn-sum-cof}
			X\ssm \{x \} \to X\# Y \to Y, \text{ and}~~~ Y\ssm \{y\} \to X\# Y \to X.
		\end{myeq}
		An additional feature in the $G$-equivariant situation is the orbit-wise connected sum. Let $X$ be a $G$-manifold and $Y$ an $N$-manifold for a subgroup $N$. The data underlying an orbit-wise connected sum is \\
		1) A point $y\in Y^N$, and a point $x\in X$ such that the stabilizer of $x$ is $N$. \\
		2)  An orientation reversing isomorphism of real $N$-representations $\varphi : T_x X \to T_y Y$. 	\\
		The condition 1) implies that $x$ induces the inclusion of an orbit $i_x : G/N \to X$. Now we may again use the exponential map to conjugate $\varphi$ and identify punctured disks at points of $G/N \hookrightarrow X$ with those at points of  $G/N\times \{y\} \hookrightarrow G\times_N Y$. The resulting connected sum is denoted by $X\# G\times_N Y$. The direct analogues of \eqref{conn-sum-cof} are 
		\[
		X\ssm \{x \} \to X\# G\times_N Y \to \frac{G\times_N Y}{G/N \times \{y\}},
		\]
		and,
		\[
		G\times_N (Y\ssm \{y\}) \to X\#G\times_N Y \to X/i_x(G/N).
		\]
		The second sequence has a refinement in the form of a homotopy pushout 
		\begin{myeq}\label{orb-conn-sum-cof}
			\xymatrix{ G\times_N(Y-\{y\}) \ar[r] \ar[d] & G\times_N C(Y-\{y\}) \ar[d]^{i_x \circ \pi_1} \\ 
				X\# G\times_N Y \ar[r] & X. }
		\end{myeq}
	\end{mysubsection}	
	
	\begin{mysubsection}{Linear actions on projective spaces} 
		The principal construction of interest in this paper is the equivariant connected sum of projective spaces. A method to construct a $G$-action on a complex projective space $\C P^n$  is to write it as $P(V)$, the projectivization of a unitary representation $V$. We call these {\it linear actions}. If $\nu$ is a $1$-dimensional complex representation of $G$, there is an equivariant homeomorphism $P(V)\cong P(V\otimes \nu)$. In the case $G=\GG$, we fix the following notation for its representations.
		
		\begin{notation}	
			The irreducible complex representations of $\GG$ are $1$-dimensional, and up to isomorphism are listed as $1_\C,\lambda, \lambda^2,\dots, \lambda^{m-1}$ where $\lambda$ sends $g$ to  $e^{2\pi i/m}$, the $m^{th}$ root of unity. The real irreducible representations are the realizations of these. The realization of $\lambda^i$ is also denoted by the same notation. Note that $\lambda^i$ and $\lambda^{m-i}$ are conjugate and hence their realizations are isomorphic by the natural $\R$-linear map $z\mapsto \bar{z}$ which reverses orientation.   
		\end{notation}
		In this paper, our principal objects of interest are linear $\GG$-actions on $\C P^2$, that is, we write $\C P^2$ as $P(V)$ where $V$ is a $3$-dimensional complex representation of $\GG$. In terms of the notation above, $V$ is a sum $ \lambda^a + \lambda^b + \lambda^c$ for some integers $a,b$ and $c$ viewed $\pmod{m}$. As $P(V) \cong P(V\otimes\nu)$ for $1$-dimensional $\nu$,  we may assume $c=0$ in the expression for $V$. We denote this by $\C P^2(a,b;m)$. 
		\begin{prop}\label{prop-act}
			The manifolds $\C P^2(a,b;m)$ satisfy the following properties.\\ 
			1) If $\gcd(a,b,m) = d$, then $\C P^2(a,b;m)\cong \pi^*\C P^2(\frac{a}{d},\frac{b}{d};\frac{m}{d})$  the pullback via $\pi: C_m \rightarrow C_m/C_d \cong C_{m/d}$.\\
			2) There are $\GG$-homeomorphisms $\C P^2(a,b;m) \cong \C P^2(a-b,-b;m) \cong \C P^2(-a,b-a;m)$, and $\C P^2(a,b;m)\cong \C P^2(b,a;m)$. \\
			3) The points $p_1=[1,0,0]$, $p_2=[0,1,0]$, and $p_3=[0,0,1]$ are fixed by $\GG$. Their tangential representations are given by 
			\[
			T_{p_1}\C P^2(a,b;m) \cong \lambda^{b-a} + \lambda^{-a}, 	~ T_{p_2}\C P^2(a,b;m) \cong \lambda^{a-b} + \lambda^{-b},~T_{p_3}\C P^2(a,b;m) \cong \lambda^{a} + \lambda^{b}.
			\]
		\end{prop}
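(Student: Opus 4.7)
The three statements all reduce to elementary manipulations of the defining unitary representation $V = \lambda^a + \lambda^b + 1_\C$ together with the identification $\C P^2(a,b;m) = P(V)$, so the plan is to treat each part as a direct computation.

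For (1), I would note that the generator $g^{m/d}$ of $C_d\le \GG$ acts on $\lambda^k$ as multiplication by $e^{2\pi i k/d}$, which is trivial whenever $d\mid k$. Since $d=\gcd(a,b,m)$ divides each of $a,\,b,\,0$, the subgroup $C_d$ acts trivially on $V$, hence trivially on $P(V)$. The induced $C_m/C_d\cong C_{m/d}$-action on $V$ is then the one with weights $a/d,\,b/d,\,0$, yielding precisely the pullback identification claimed.

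For (2), the key observation is the $\GG$-homeomorphism $P(V)\cong P(V\otimes \nu)$ for any $1$-dimensional unitary representation $\nu$, since tensoring with a line preserves projective classes. Taking $\nu=\lambda^{-b}$ converts $V$ into $\lambda^{a-b}+1_\C+\lambda^{-b}$, which after reordering summands is the representation defining $\C P^2(a-b,-b;m)$. Similarly $\nu=\lambda^{-a}$ produces $\C P^2(-a,b-a;m)$. Finally, the coordinate swap $[z_1,z_2,z_3]\mapsto [z_2,z_1,z_3]$ is $\GG$-equivariant because it interchanges the $\lambda^a$ and $\lambda^b$ weight lines while fixing $1_\C$, giving $\C P^2(a,b;m)\cong \C P^2(b,a;m)$.

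For (3), since $\lambda^a,\lambda^b,1_\C$ are the three weight lines, the points $p_1,p_2,p_3$ are $\GG$-fixed. To compute $T_{p_3}\C P^2(a,b;m)$ I would use the affine chart $(z_1,z_2)\mapsto [z_1,z_2,1]$, on which $g$ acts by $(z_1,z_2)\mapsto (e^{2\pi i a/m}z_1,\,e^{2\pi i b/m}z_2)$, giving $T_{p_3}\cong \lambda^a+\lambda^b$. The charts $(z_2,z_3)\mapsto [1,z_2,z_3]$ around $p_1$ and $(z_1,z_3)\mapsto [z_1,1,z_3]$ around $p_2$ introduce an overall twist by $\lambda^{-a}$ and $\lambda^{-b}$ respectively on the tangent coordinates, producing the stated weights $\lambda^{b-a}+\lambda^{-a}$ and $\lambda^{a-b}+\lambda^{-b}$. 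No step is a genuine obstacle here; the only place that demands care is the bookkeeping of weights when reindexing summands in (2) and when passing from homogeneous to affine coordinates in (3).
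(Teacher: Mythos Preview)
Your proposal is correct and follows essentially the same route as the paper, which dispatches the proposition in one sentence by invoking $P(V)\cong P(V\otimes\nu)$ and the identification $T\C P^2\cong\Hom(\gamma,\gamma^\perp)$. The only minor difference is in part~(3): you compute the tangential representations via affine charts, whereas the paper points to the description of the tangent bundle as $\Hom(\gamma,\gamma^\perp)$; at a fixed point $p_i$ lying on the weight line $\phi_i$, this gives $T_{p_i}\cong\phi_i^{-1}\otimes(\phi_j\oplus\phi_k)$, which is exactly what your chart calculation recovers.
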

		The proof of the above easily follows from the homeomorphism $P(V)\cong P(V\otimes \nu)$, and the identification of the tangent bundle of $\C P^2$ as $\Hom(\gamma, \gamma^\perp)$, where $\gamma$ is the canonical line bundle. In this paper, we  call the numbers $(a,b)$ associated to the representation $\lambda^a+\lambda^b$ {\it rotation numbers}. As in \cite{HT04}, we assume that for the manifold denoted by $\C P^2(a,b;m)$, $\gcd(a,b,m)=1$.  
		
		We also denote $S^4(a,b;m)$ for the $\GG$-action on $S^4$ by identifying it with $S^{\lambda^a+\lambda^b}$. This may also be described as $S(1+\lambda^a+\lambda^b)$, where $1$ is the trivial real representation of dimension $1$. This action has fixed points $0$ and $\infty$, and the tangential representations are $\lambda^{a}+\lambda^{b}$ and $\lambda^{-a} + \lambda^{-b}$ respectively.
		
		We now list the conditions required to form equivariant connected sums of copies of $\C P^2(a,b;m)$ and $S^4(a,b;m)$. 
		\begin{prop}\label{cond-conn-sum}
			1) The connected sum $\C P^2(a,b;m) \# \C P^2(a',b';m)$ may be formed if and only if for one of the equivalent choices of $(a',b')$ as in 2) of Proposition \ref{prop-act}, $\pm (a',b')\in \{(a,-b), (a-b,b), (a,b-a)\}$. Once this condition is satisfied, there is a natural choice of data for the connected sum unless $a=b$ or one of $a$, $b$ is $0$.\\
			2) The connected sum $\C P^2(a,b;m) \# S^4(a',b';m)$ may be formed if and only if $\pm (a',b')\in \{(a,-b), (a-b,b), (a,b-a)\}$. In this case, $\C P^2(a,b;m)\# S^4(a',b';m)$ is $\GG$-homeomorphic to $\C P^2(a,b;m)$. \\
			3) For $m'\mid m$ but $m'\neq m$, the connected sum $\C P^2(a,b;m) \# \GG\times_{C_{m'}} \C P^2(a', b';m')$ may be defined if and only if the following are satisfied \\ 
a) One of $a$, $b$, or $a-b$ satisfies the equation $\gcd(x,m)=m'$. \\
b) One of $a'$, $b'$, or $a'-b'$ is $0$, and the others are up to sign two of the numbers in $\{-b,-a, b-a\}$ not divisible by $m'$. 
		\end{prop}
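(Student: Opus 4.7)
The plan is to verify each part by reducing the existence of equivariant connected-sum data to a linear-algebraic matching of tangent representations, then reading off the stated conditions. The key computational fact is: two unitary planes $\lambda^x+\lambda^y$ and $\lambda^{x'}+\lambda^{y'}$ are isomorphic as real $\GG$-representations iff $\{\{\pm x\},\{\pm y\}\}=\{\{\pm x'\},\{\pm y'\}\}$ as multisets modulo $m$, and such an isomorphism is orientation-reversing iff the matching identifies an odd number of summands by complex conjugation (each conjugation reversing orientation on a single complex factor).

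For part (1), apply this criterion to the rotation pairs $(b-a,-a),(a-b,-b),(a,b)$ at $p_1,p_2,p_3$ of $\C P^2(a,b;m)$ listed in Proposition \ref{prop-act}(3). Matching the rotation pair $(a',b')$ at $p_3$ of $\C P^2(a',b';m)$ to each of these in turn yields $\pm(a',b')\in\{(a,b-a),(a-b,b),(a,-b)\}$ up to the swap symmetry $(a',b')\leftrightarrow(b',a')$; the symmetries in Proposition \ref{prop-act}(2) permit relabeling any other fixed point of $\C P^2(a',b';m)$ as $p_3$ of an equivalent model, so the list is exhaustive. The naturality clause reflects that when $a\neq b$ and both are nonzero, the summands $\lambda^a,\lambda^b$ of $T_{p_3}\C P^2(a,b;m)$ are distinguished by their characters and an orientation-reversing complex-antilinear matching is canonical up to an irrelevant unitary scalar; if $a=b$ the combined summand admits $U(2)$-automorphisms, and if $a=0$ or $b=0$ the trivial summand admits $GL_2(\R)$-automorphisms, destroying canonicity. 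Part (2) proceeds in parallel: $S^4(a',b';m)$ has two fixed points with tangent representations $\lambda^{a'}+\lambda^{b'}$ and its conjugate, both real-isomorphic, so the matching condition is identical. That $\C P^2(a,b;m)\# S^4(a',b';m)\cong \C P^2(a,b;m)$ follows because $S^4(a',b';m)$ minus a fixed point is equivariantly diffeomorphic to a disk in $\lambda^{a'}+\lambda^{b'}$, and gluing this disk onto a disk neighborhood of the matched fixed point via the equivariant tubular neighborhood theorem reconstructs the original manifold.

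For part (3), first determine the non-free locus of $\C P^2(a,b;m)$: outside the three fixed points, the singular set is the union of the three coordinate projective lines $\{x_i=0\}$, and direct calculation on homogeneous coordinates yields generic stabilizers $C_{\gcd(a-b,m)},C_{\gcd(a,m)},C_{\gcd(b,m)}$ for $i=3,2,1$ respectively, giving condition (a). For (b), take a generic point on, say, the line $\{x_2=0\}$ with $m'=\gcd(a,m)$ so that $a\equiv 0\pmod{m'}$; the affine chart at $p_1$ identifies its tangent $\GG$-representation as $\lambda^{b-a}+\lambda^{-a}$, which restricts to $C_{m'}$ as $\lambda^{b\bmod m'}+1_\C$. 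Matching this against a fixed-point tangent representation of $\C P^2(a',b';m')$ forces exactly one of $a',b',a'-b'$ to be $\equiv 0\pmod{m'}$ and the remaining nonzero rotation to be $\pm b\pmod{m'}$; analogous computations for the other two lines show the nonzero rotation must lie in $\{\pm a,\pm b,\pm(b-a)\}\pmod{m'}$, excluding multiples of $m'$, which is precisely (b). The principal bookkeeping obstacle throughout is orientation: complex orientations at the three fixed points of $\C P^2(a,b;m)$ are related by the equivalences of Proposition \ref{prop-act}(2) in a nontrivial way, and verifying that an orientation-reversing tangent matching extends consistently across these relabellings---so that the connected-sum data, not merely the numerical condition, is available---requires care.
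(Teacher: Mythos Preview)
Your proposal is correct and follows essentially the same approach as the paper: reducing the question to matching tangential representations at fixed points (computed in Proposition~\ref{prop-act}(3)) for parts (1)--(2), and analyzing the stabilizers along the coordinate projective lines for part (3). In fact the paper's own justification is a one-line remark---``the statements 1) and 2) follow from the examination of tangential representations at fixed points of $\C P^2(a,b;m)$, and 3) follows from the results in \cite[\S 1.C]{HT04}''---with the concrete details of the connected-sum data deferred to Example~\ref{conn-sum-cp2} and the discussion thereafter; your write-up supplies considerably more of that detail up front.
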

		The statements 1) and 2) follow from the examination of tangential representations at fixed points of $\C P^2(a,b;m)$, and 3) follows from the results in \cite[\S 1.C]{HT04}. 
	\end{mysubsection}

	\begin{mysubsection} {Admissible weighted trees} The observations above inform us that the $\GG$-connected sums of different $\C P^2(a,b;m)$s and $S^4(a,b;m)$s may be formed only when certain relations are satisfied between the weights involved.  We now lay down the sequence of combinatorial criteria which allow us to form such a connected sum. These are written in the form of weights attached to trees with a $\GG$-action satisfying required conditions, called {\it admissible weighted trees}.
		
		Recall that a group action on a tree is given by an action on the vertices which preserves the adjacency relation. We define two types of trees called {\it Type I} and {\it Type II}.
		\begin{defn}	\label{adm-wt-tree}
			An {\it admissible weighted tree} is a  tree with $\GG$-action having the following properties 
			\begin{enumerate}
				\item There is a $\GG$-fixed vertex $v_0$ called the root vertex of the tree. In case of a type II tree, $v_0$ is the unique $\GG$-fixed vertex. 
				\item The vertices of $\mathbb{T}$ are arranged in levels starting from zero according to the distance from the root vertex with edge length considered to be $1$. Observe that $\GG$ preserves the levels and every edge goes from level $L$ to $L+1$ for some $L$.
				\item Each vertex $v$ is equipped with a weight $w(v)=(a_v,b_v;m_v)$ (defined up to equivalence $(a_v,b_v;m_v)\sim (b_v,a_v;m_v)$) such that $m_v \mid m$ and $m_{v_0}=m=|\GG|$,   and $\gcd (a_v,b_v,m_v) = 1$ for {all $v$}.
				\item For every vertex $v$, $Stab(v) = C_{m_v}\subset \GG$. Also $w(g.v) = w(v)$, so that weights of vertices in the same orbit are equal. 
				\item In the case of type I trees, there are at most three vertices $v$ of level $1$ such that $m_v = m$. Each of these vertices have distinct weights (up to equivalence) among $\{\pm (a_{v_0},-b_{v_0}), \pm (a_{v_0},b_{v_0}-a_{v_0}), \pm (b_{v_0},a_{v_0}-b_{v_0})\}$.
				\item Vertices with the same weight (up to equivalence and sign) do not have a common neighbour unless they are related by the $\GG$-action. 
				\item Suppose there is an edge $e$ from $ v$ in level $L$ to $u$ in level $L+1$. Then $m_{u}\mid m_v$ and 
				\begin{enumerate}
					\item If $m_{u}= m_v$  and $ v $ is not the root vertex, then $ \pm (a_u,b_u) \in \{(a_v,b_v-a_v),(a_v-b_v,b_v)\}$.
					\item If $m_{u} \neq m_v$ then one of $a_v, b_v, a_v-b_v$ satisfies the equation $\gcd(x,m)=m_u$, accordingly $b_{u} = 0$, and $a_{u} $ equals  a value among $\pm$ $\{-a_v, -b_v, b_v-a_v\}$ not divisible by $m_{u}$.
				\end{enumerate}
				
			\end{enumerate}
		\end{defn}
		
		\begin{rmk}
			The definition of admissible, weighted tree above is the same definition as \cite[\S 1.D]{HT04}. To see this, one may observe the following
			\begin{itemize}
				\item The tree as defined inherits a direction, where an edge $e$ moving from level $L$ to $L+1$ is directed so that $\partial_0 e$ lies in level $L$, and $\partial_1 e$ lies in level $L+1$.  One also observes that a vertex in level $L>0$ is connected to a unique vertex in level $L-1$. 
				\item The partial order may be generated from the condition that $\partial_0 e < \partial_1 e$. This implies that two vertices are comparable if they are connected by a sequence of edges, and in this case the order relation is determined by the level. 
				\item The weights $w(v)=(a_v,b_v;m_v)$ are so defined that we obtain an equivalent weight under the operations $(a_v,b_v) \mapsto (b_v, a_v)$. This is the equivalence of weights referred to in the definition above. 
				\item The conditions (5) and (7) above reflect the condition ``pair of matching fixed components'' of \cite{HT04}. As we shall see, this is a slightly stronger condition that also includes the data required for us to form the corresponding equivariant connected sum. 
			\end{itemize}
		\end{rmk}
		
		\begin{notation}
			The number $n(\T)$ associated to an admissible, weighted tree $\T$ with vertex set $V(\T)$ is defined as 
			\[
			n(\T)= \begin{cases} \#(V(\T)) & \mbox{ if } \T \mbox{ is of type I} \\ 
				\#(V(\T)) - 1 & \mbox{ if } \T \mbox{ if of type II}. \end{cases} 
			\] 
		\end{notation}		
		
		The significance of the notation $n(\T)$ is that we associate to an admissible weighted tree $\T$, a $\GG$-manifold $X(\T)$ whose underlying space is $\#^{n(\T)} \C P^2$. 	We will use the notation $ \T_0 $ for $ \T^{C_p} $, and $ \T_d=\{v\in \T\mid Stab(v)=C_d\} $. Observe that  $ \GG/C_d$ acts freely on $ \T_d $.
	\end{mysubsection}
	
	\begin{mysubsection}{Construction of connected sums along trees} The construction of $X(\T)$, the $\GG$-manifold obtained by the connected sum of linear actions according to the data described in the tree $\T$ is carried out in \cite[Theorem 1.7]{HT04}. We describe it's main features below. For a vertex $v\in V(\T)$, we use the notation 
		\[
		\C P^2_v := \C P^2 (a_v,b_v;m_v).
		\]
		\begin{prop} \label{tree-mfld-const}
			Given an admissible, weighted tree $\T$, there is a $\GG$-manifold $X(\T)$ such that 
			\begin{enumerate}
				\item The underlying space of $X(\T)$ is $\#^{n(\T)} \C P^2$. 
				\item If $\T$ is of type I, then $X(\T) \cong$ a connected sum of copies of $\C P^2_v$ for every vertex $v$ of $\T$. 
				\item If $\T$ is of type II, then $X(\T) \cong$ a connected sum of copies of $\C P^2_v$ for every non-root vertex $v$ of $\T$, and a copy of $S^4(a_{v_0},b_{v_0};m)$. 
				\item For a non-root vertex $v$ in level $L$, which is connected to $w$ in level $L-1$ with $m_v=m_w$, the points where the connected sum is performed are $[0,0,1] \in \C P^2_v$,  and the one in $\C P^2_w$ determined by the condition (7)(a) of Definition \ref{adm-wt-tree} if $w$ is not the root vertex, or by (5) of Definition \ref{adm-wt-tree} if $w=v_0$. 
				\item For a non-root vertex $v$ in level $L$ connected to $w$ in level $L-1$ with $m_v<m_w$, the points where the connected sum is performed are $[0,0,1] \in \C P^2_v$,  and some equivalent choice of point in $\C P^2_w$ determined by the condition (7) (b) of Definition \ref{adm-wt-tree}. Equivalent choices of the latter give equivalent manifolds \cite[Lemma 1.2]{HT04}. 
			\end{enumerate}
		\end{prop}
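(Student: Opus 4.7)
The plan is to construct $X(\T)$ inductively, building up the manifold one orbit of vertices at a time as we move outward from the root, and to verify at each stage that the admissibility conditions in Definition \ref{adm-wt-tree} supply precisely the data required by Proposition \ref{cond-conn-sum} to form the next equivariant or orbit-wise connected sum. Concretely, I would induct on $n(\T)$ (equivalently, on the total number of non-root vertices), where the base case is $X(\T_0) = \C P^2_{v_0}$ when $\T$ is of type I, and $X(\T_0) = S^4(a_{v_0},b_{v_0};m)$ when $\T$ is of type II. Since $v_0$ is $\GG$-fixed with $m_{v_0}=m$, both manifolds carry the prescribed $\GG$-action and have the correct underlying topological type.

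For the inductive step, fix an orbit $\GG\cdot v$ of vertices of maximal level in $\T$ and let $\T'$ be the admissible weighted subtree obtained by deleting this orbit. By induction we have a $\GG$-manifold $X(\T')$, and it suffices to attach the copies of $\C P^2_v$ (one for each element of $\GG\cdot v$) to $X(\T')$. Let $w$ denote the unique vertex at level $L-1$ adjacent to $v$; the axiom that every vertex at level $L>0$ has a unique predecessor (noted in the remark) guarantees that the attaching site is unambiguous. There are two cases. If $m_v = m_w$, then $v$ is $\GG$-fixed (since $w$ is already fixed up to its orbit and $m_v=m_w$ forces the stabilizer to coincide), and condition (7)(a) of Definition \ref{adm-wt-tree} is exactly the numerical hypothesis of Proposition \ref{cond-conn-sum}(1), so one may form the equivariant connected sum of $X(\T')$ at the appropriate fixed point of the previously attached $\C P^2_w$ (or at $v_0$, using condition (5) when $w=v_0$) with a copy of $\C P^2_v$ at $[0,0,1]$. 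If $m_v<m_w$, condition (7)(b) matches Proposition \ref{cond-conn-sum}(3), so one performs the orbit-wise equivariant connected sum of $X(\T')$ with $\GG\times_{C_{m_v}} \C P^2_v$ along the orbit $\GG/C_{m_v}$.

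At each step the choice of attaching point in $X(\T')$ is dictated by condition (7), and the orientation-reversing $G$-equivariant identification of tangent spaces is furnished by Proposition \ref{prop-act}(3): the tangential representation at $[0,0,1]\in \C P^2_v$ is $\lambda^{a_v}+\lambda^{b_v}$, which, by the admissibility relations, is the conjugate (hence orientation-reversed form) of the tangential representation at the chosen point of $X(\T')$. The non-collision condition (6) ensures that distinct orbits of incoming vertices have distinct attaching points, so the connected sums can be performed simultaneously along the full orbit without interference. Finally, the assertion (1) on the underlying topological type follows because each equivariant connected sum with a $\C P^2_v$ adds exactly one $\C P^2$-summand to the topological connected sum, each orbit-wise connected sum along $\GG/C_{m_v}$ adds $[\GG:C_{m_v}]$ copies, and connected summing with $S^4(a',b';m)$ leaves the underlying manifold unchanged (Proposition \ref{cond-conn-sum}(2)); summing these contributions produces $n(\T)$ copies of $\C P^2$ in both the type I and type II cases.

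The main subtlety is well-definedness: different admissible choices among the equivalent triples in condition (7) must yield $\GG$-homeomorphic manifolds. This is where I would invoke \cite[Lemma 1.2]{HT04}, which asserts that equivalent choices of attaching data give equivalent manifolds; combined with the orbit-freeness noted at the end of the admissibility discussion (so that vertex orbits $\GG/C_d$ are processed coherently), this shows that $X(\T)$ depends only on the isomorphism class of the admissible weighted tree $\T$ and not on auxiliary choices. The verification that the iterated construction is associative, i.e.\ invariant under the order in which orbits of the same level are attached, is the principal technical point and follows from the fact that the supports of the distinct connected-sum operations are disjoint in $X(\T')$, as guaranteed by condition (6).
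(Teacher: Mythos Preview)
Your inductive construction is essentially what the paper does: it cites \cite[Theorem 1.7]{HT04} for the existence of $X(\T)$ and then, in the paragraphs following the proposition, carries out exactly the tangential-representation matching you describe for cases (4) and (5), using Proposition~\ref{prop-act}(3) and condition (6) in the same way.

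One genuine slip to fix: in your case split you assert that if $m_v=m_w$ then ``$v$ is $\GG$-fixed.'' This is false when $m_v=m_w<m$; both $v$ and $w$ then have stabilizer $C_{m_v}\lneq\GG$, and the attachment is still an orbit-wise connected sum along $\GG/C_{m_v}$, performed $C_{m_v}$-equivariantly on a chosen orbit representative (the paper's discussion of (4) treats the local $C_{m_w}$-picture without assuming $m_w=m$). Your framework accommodates this immediately once you drop the incorrect claim and observe that condition (7)(a) still supplies a matching pair of $C_{m_v}$-fixed points with conjugate tangential representations; the rest of your argument---the use of condition (6) for disjointness, the counting for the underlying topological type, and the appeal to \cite[Lemma 1.2]{HT04} for independence of choices---goes through unchanged.
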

		
		We now elaborate further on (4) and (5) of Proposition \ref{tree-mfld-const} above. We start with an example. 
		\begin{exam}\label{conn-sum-cp2}
			In order to see if $\C P^2(a,b;m)\# \C P^2(a',b';m)$ is definable we may apply 1) of Proposition \ref{cond-conn-sum}. Another method of saying this is that there is an expression of the second summand as $\C P^2(a',b';m)$ such that $\pm (a',b') \in \{(a,-b), (a-b,b), (a,b-a)\}$. Once this choice is made, say $(a',b')=(a,-b)$, we get a natural data for the equivariant connected sum as 
			\begin{enumerate}
				\item The point  $p\in \C P^2(a,b;m)$ used in the connected sum is $[0,0,1]$, and the corresponding tangential representation is $\lambda^a+\lambda^b$. 
				\item The point $q\in \C P^2(a',b';m)$ used in the connected sum is $[0,0,1]$, and the corresponding tangential representation is $\lambda^{a'} + \lambda^{b'}=\lambda^a + \lambda^{-b}$. 
				\item The natural orientation reversing isomorphism $T_p \C P^2(a,b;m) \to T_q \C P^2(a',b';m)$ is given by identity on the factor $\lambda^a$ and complex conjugation on the factor $\lambda^b$. 
			\end{enumerate} 
		\end{exam}
		
\vspace*{0.4cm}

		In (4) of Proposition \ref{tree-mfld-const}, the choice of $(a_v, b_v)$ implies that 
		\[
		T_{[0,0,1]}\C P^2_v =  \lambda^{a_v} + \lambda^{b_v}   
		\]
		equals one of $\lambda^{a_w} +\lambda^{b_w-a_w}$, $\lambda^{-a_w} +\lambda^{a_w-b_w}$, $\lambda^{b_w} +\lambda^{a_w-b_w}$, $\lambda^{-b_w} +\lambda^{b_w-a_w}$, in the case $w$ is not the root vertex. We also note  the tangential representations 
		\[
		T_{[1,0,0]}\C P^2_w = \lambda^{-a_w} + \lambda^{b_w-a_w}, ~	T_{[0,1,0]}\C P^2_w = \lambda^{-b_w} + \lambda^{a_w-b_w}.
		\]
		Among the possibilities for $T_{[0,0,1]}\C P^2_v$, the first two are compatible with $T_{[1,0,0]}\C P^2_w$, and the second two are compatible with $T_{[0,1,0]}\C P^2_w$. This demonstrates how the weights imply the choice of connected sum point in $\C P^2_w$. The argument in Example \ref{conn-sum-cp2} applies here to construct a canonical orientation reversing isomorphism among the tangential representations. Finally the condition (6) of Definition \ref{adm-wt-tree} implies that the choice of connected sum point is not the same as that of any other vertex. 
		
		We now look at (5) of Proposition \ref{tree-mfld-const}. The condition $m_v<m_w$ implies that $m_v$ is a proper divisor of $m_w$. Consider $\OO\cong C_{m_w}/C_{m_v}$, the orbit of $v$ under the $C_{m_w}$-action. From (4) of Definition \ref{adm-wt-tree}, we observe that all the vertices in $\OO$ have weight $w(v)$. The connected sum formed here is $\C P^2_w \# C_{m_w}\times_{C_{m_v}} \C P^2_v$, which connects the manifolds at all the vertices in $\OO$ to $\C P^2_w$ at one go by writing 
		\[
		C_{m_w}\times_{C_{m_v}} \C P^2 \cong \coprod_{h\in C_{m_w}/C_{m_v}} h \cdot \C P^2_v =  \coprod_{h\in C_{m_w}/C_{m_v}}  \C P^2_{h\cdot v}.
		\]
		In this case, we have $b_v=0$ and $m_v$ divides one of the numbers $a_w$, $b_w$, $b_w-a_w$ but not more than one (unless $m_v=1$) as $\gcd(a_w,b_w,m_w)=1$. We may assume $\gcd( b_w,m_w)=m_v$ without loss of generality, and it implies $a_v=\pm a_w$. The first part of the equivariant data for the connected sum is the point $[0,0,1]$ in $\C P^2_v$ with tangential $C_{m_v}$-representation $\lambda^{a_v}+1_\C$ (as a complex representation). The next part is a choice of connected sum point which is required to have stabilizer $C_{m_v}$, and hence belongs to 
		\[
		P(\lambda^{b_w}+1_\C) - \{[0,1,0], [0,0,1]\} \subset P(\lambda^{a_w}+\lambda^{b_w}+1_\C)=\C P^2_w. 
		\]
		For any point  $q\in P(\lambda^{b_w}+1_\C) - \{[0,1,0], [0,0,1]\}$ the tangential $C_{m_v}$-representation is $1_\C+\lambda^{a_w}$. We now have a canonical orientation reversing isomorphism between $T_{[0,0,1]}\C P^2_v$ and $T_q \C P^2_w$ which is conjugation on $\lambda^{a_v}$ if $a_v=-a_w$, or conjugation on the other factor if $a_v=a_w$. As $P(\lambda^{b_w}+1_\C) - \{[0,1,0], [0,0,1]\}$ is connected, this defines the equivariant connected sum up to diffeomorphism (\cite[Lemma 1.2]{HT04}). Note also that there is a completely analogous version of the above if $w$ was the root vertex of a type II tree, and $\C P^2_w$ was replaced by $S^{\lambda^{a_w}+\lambda^{b_w}}$. 
		
	\end{mysubsection}

	\section{Equivariant homology  for cyclic groups} \label{sec:3}
	In this section, we recall the definition of equivariant homology with coefficients in a Mackey functor. The main objective is to describe a theorem on $\uZ$-homology which allows us to construct the homology decompositions in the following sections. Equivariant homology and cohomology possess the richest structure when the coefficients are Mackey functors \cite{Dre72}, which we summarize in explicit terms below. 
	\begin{defn}
		A $\GG$-Mackey functor\footnote{This is a simplification in the case $\GG$ is Abelian. Otherwise the double coset formula (4) has a slightly more complicated expression.} $\uM$ is a collection of commutative $\GG/H$-groups $\uM(\GG/H)$ for each subgroup $H \le \GG$, accompanied by \emph{transfer} $\tr^H_K\colon  \uM(\GG/K) \to \uM(\GG/H)$ and \emph{restriction} $\res^H_K \colon\uM(\GG/H) \to \uM(\GG/K)$ for $K\le H\le \GG$ such that
		\begin{enumerate}
			\item $\tr^H_J = \tr^H_K\tr^K_J$ and $\res^H_J = \res^K_J \res^H_K$ for all $J \le K \le H.$
			\item  $\tr^H_K(\gamma.x)= \tr^H_K(x)$ for all $x \in \uM(\GG/K)$ and $\gamma \in H/K.$
			\item $\gamma. \res^H_K(x) =\res^H_K(x)$ for all $x \in \uM(\GG/H)$ and $\gamma \in H/K.$
			\item $\res^H_K\tr ^J_K(x)= \sum_{\gamma \in H/K} \gamma.\tr^{K}_{J\cap K}(x)$
			for all subgroups $J,H \leq K.$
		\end{enumerate}
	\end{defn}

\begin{exam}	
The Burnside ring Mackey functor denoted $\uA$ is described by $\uA(\GG/H)=A(H)$, the Burnside ring of $H$. This is the group completion of the monoid of finite $H$-sets up to isomorphism. The restriction maps are given by restricting the action, and the transfer maps are given by inducing up the action : $S \mapsto H\times_K S$ for $K\leq H$. 

In this paper, we work primarily with the constant Mackey functor $\uZ$  described by 
	\[
	\uZ(\GG/H)= \Z,~~ \res^H_K=Id,~~ \tr^H_K = [H:K],  	
	\]
	for $K\leq H \leq \GG$. One may make a dual construction to define the Mackey functor $\uZ^\ast$ by 
\[
	\uZ^\ast(\GG/H)= \Z,~~ \res^H_K=[H:K],~~ \tr^H_K = Id,  	
	\]
	for $K\leq H \leq \GG$. For an Abelian group $C$, the Mackey functor $\langle C \rangle$ is described by
 \[
	\langle C \rangle(\GG/H)= \begin{cases} C & \mbox{ if } H=\GG \\ 0 & \mbox{ otherwise}. \end{cases}  	
	\]
\end{exam}

\vspace*{0.4 cm}

The importance of Mackey functors from the point of view of ordinary cohomology in the equivariant case is due to the following result. 
	\begin{thm}\cite[Theorem 5.3]{GM95} 
		For a Mackey functor $\uM$, there is an Eilenberg-MacLane $\GG$-spectrum $H\uM$ which is unique up to isomorphism in the equivariant stable homotopy category. 
	\end{thm}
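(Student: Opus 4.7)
The plan is to realize $H\uM$ as the $\GG$-spectrum characterized up to equivalence by $\upi_0^{\GG}(H\uM)\cong\uM$ and $\upi_n^{\GG}(H\uM)=0$ for $n\neq 0$, via the standard ``kill higher homotopy groups'' Postnikov construction. The key input I would lean on is that the category of $\GG$-Mackey functors is abelian with enough projectives: the functors $\uA_H:=\upi_0^{\GG}(\GG/H_+)$, which are representable via tom Dieck's calculation of $\upi_0$ of the equivariant sphere, form a set of projective generators, and for any $\GG$-spectrum $Y$ one has the Yoneda-style identification $[\GG/H_+,Y]^{\GG}\cong\upi_0^{\GG}(Y)(\GG/H)$ compatibly with the Mackey structure.

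For existence I would first choose a projective presentation $P_1\xrightarrow{\varphi}P_0\twoheadrightarrow\uM$ with $P_0=\bigoplus_i\uA_{K_i}$ and $P_1=\bigoplus_j\uA_{H_j}$, and use Yoneda to realize $\varphi$ by a map $\tilde\varphi\colon\bigvee_j\GG/H_{j+}\to\bigvee_i\GG/K_{i+}$ of $\GG$-spectra. Letting $X_0$ denote its cofiber, the long exact sequence of homotopy Mackey functors shows $\upi_0(X_0)\cong\uM$. I would then inductively kill the higher homotopy Mackey functors: given $X_n$ with $\upi_i(X_n)=0$ for $1\le i\le n$, choose a projective surjection $\bigoplus_j\uA_{L_j}\twoheadrightarrow\upi_{n+1}(X_n)$, realize it by a map $\bigvee_j\Sigma^{n+1}\GG/L_{j+}\to X_n$, and take the cofiber. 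The spectrum $H\uM:=\hocolim_n X_n$ then has the required homotopy Mackey functors.

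Uniqueness is obstruction-theoretic: given two candidate spectra $H\uM$ and $H'\uM$, the identity $\uM\to\uM$ lifts to a map $H\uM\to H'\uM$ through the projective resolution, since the successive obstruction classes live in higher homotopy Mackey functors of $H'\uM$ that vanish by hypothesis; any such map inducing the identity on $\upi_0$ is automatically a $\upi_*^{\GG}$-isomorphism, hence a stable equivalence by the equivariant Whitehead theorem. The main technical obstacle is establishing the Yoneda identification $[\GG/H_+,Y]^{\GG}\cong\upi_0^{\GG}(Y)(\GG/H)$ as an equality of Mackey functors, i.e.\ compatibly with both restriction and transfer; this rests on the Wirthm\"uller isomorphism, which makes $\GG/H_+$ suitably self-dual and produces the transfer maps at the spectrum level. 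Once that Mackey-functorial Yoneda lemma is in place, the killing-off construction and the obstruction theory proceed just as in the nonequivariant case.
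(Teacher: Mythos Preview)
The paper does not prove this statement at all: it is quoted as \cite[Theorem 5.3]{GM95} and simply invoked as background, with no argument given. So there is no ``paper's own proof'' to compare against; the authors defer entirely to Greenlees--May.

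Your sketch is the standard construction and is essentially what one finds in the cited reference and in the broader literature (Lewis--May--Steinberger, May's Alaska notes, etc.): realize a projective presentation of $\uM$ by wedges of suspension spectra of orbits, take the cofiber, then iteratively kill higher homotopy Mackey functors, with uniqueness following from obstruction theory and the equivariant Whitehead theorem. The Yoneda/Wirthm\"uller identification you single out as the main technical point is exactly the right thing to isolate. So your proposal is correct in spirit and supplies far more detail than the paper does, but since the paper only cites the result there is nothing substantive to contrast.
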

	This Eilenberg-MacLane spectra are those whose integer-graded homotopy groups are concentrated in degree $0$ in the category of equivariant orthogonal spectra \cite{MM02}. The homotopy category of equivariant orthogonal spectra is called the equivariant stable homotopy category, where one has desuspension functors for one-point compactifications of orthogonal $\GG$-representations. 
	\begin{exam}
		For a $\GG$-spectrum $X$, the equivariant homotopy groups possess the structure of a Mackey functor $\upi_n(X)$, defined by the formula 
		\[ \upi_n(X)(\GG/H):= \pi_n(X^H).
		\]
		Analogously the cohomology theory and homology theory associated to Mackey functors are $RO(\GG)$-graded and may also be equipped with the structure of a Mackey functor which on objects is described as 
		\[
		\uH^\alpha_\GG(X;\uM)(\GG/K) \cong \mbox{ Ho-}\GG\mbox{-spectra } (X\smas \GG/K_+ , \Sigma^\alpha H\uM),
		\]
		\[
		\uH_\alpha^\GG(X;\uM)(\GG/K) \cong \mbox{ Ho-}\GG\mbox{-spectra } (S^\alpha \smas \GG/K_+ , X \smas H\uM),
		\]
		for $\alpha\in RO(\GG)$. 
	\end{exam}
	
\vspace*{0.4cm}

	The Mackey functor $\uZ$ has a multiplicative structure which makes it a {\it commutative Green functor} \cite[Chapter XIII.5]{May96}. The consequence of this multiplication is that the cohomology $H^\bigstar_\GG(X;\uZ)$ has a graded commutative ring structure. The multiplicative structure also allows us to consider the Mackey functors which are $\uZ$-modules, and examples of these are the homology and the cohomology Mackey functors $\uH_\alpha^\GG(X;\uZ)$ and $\uH^\alpha_\GG(X;\uZ)$. 
	\begin{rmk}\label{cohomological}
		For  any $\uM\in \uZ$-$\Mod_\GG$, $\tr^H_K \res^H_K$ equals the multiplication by index $[H: K]$ for $K \le H \le \GG$ \cite[Theorem 4.3]{Yos83}.
	\end{rmk}
	
\vspace*{0.4cm}
	The spectrum $H\uZ$ also has the following well-known relation after smashing with representation spheres. 
	\begin{prop}\label{smash-rep}	
		If $ (d,m)=1 $, then 
		\[
		H\uZ\smas S^{\lambda^k} \simeq H\uZ\smas S^{\lambda^{dk}}.
		\]
	\end{prop}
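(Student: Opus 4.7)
The plan is to exploit the fact that when $\gcd(d,m)=1$, the $\GG$-representations $\lambda^k$ and $\lambda^{dk}$ possess identical fixed-point dimension data on every subgroup: for $C_n\le \GG$, $(\lambda^k)^{C_n}\ne 0$ precisely when $C_n \subseteq C_{m'}$ where $m'=\gcd(k,m)$, and one has $\gcd(dk,m)=\gcd(k,m)=m'$ under our hypothesis. Hence $\lambda^k$ and $\lambda^{dk}$ share the same dimension function $H \mapsto \dim V^H$, and it is plausible that their representation spheres become indistinguishable after smashing with $H\uZ$.

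I would leverage this by comparing Bredon cellular decompositions. Consider the cofiber sequence $S(\lambda^k)_+ \to S^0 \to S^{\lambda^k}$ and its analogue for $\lambda^{dk}$, and smash both with $H\uZ$. Both $S(\lambda^k)$ and $S(\lambda^{dk})$ are $\GG$-circles on which $C_{m'}$ acts trivially and $\GG/C_{m'}$ acts freely (by different rotations), so each admits a $\GG$-CW structure with a single orbit of $0$-cells and a single orbit of $1$-cells, both of orbit type $\GG/C_{m'}$. The resulting cellular chain complexes of Mackey functors have the form
\[
	\uM \xrightarrow{\,\bar g^a - 1\,} \uM
\]
where $\uM$ denotes the free Mackey functor on the $\GG$-set $\GG/C_{m'}$, $\bar g$ is a generator of $\GG/C_{m'}$, and $a \in (\Z/(m/m'))^\times$ is a unit depending on $k$ (respectively on $dk$, where it is replaced by $da$, still a unit). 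Both complexes have identical Mackey-functor homology (computing the Bredon homology of a $\GG$-circle with the given isotropy structure), and since $\uM$ is projective in the relevant module category, this quasi-isomorphism lifts to a chain homotopy equivalence. Consequently $H\uZ \smas S(\lambda^k)_+ \simeq H\uZ \smas S(\lambda^{dk})_+$ as $H\uZ$-modules.

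To complete the argument, I would observe that under this equivalence the collapse maps to $S^0$ correspond at the chain level to the canonical augmentation $\uM \to \uZ$ in degree $0$, which is intrinsic to $\uM$ and entirely independent of $k$. Hence the collapse maps agree and the naturality of cofiber sequences yields the desired $H\uZ \smas S^{\lambda^k} \simeq H\uZ \smas S^{\lambda^{dk}}$. The key technical obstacle is the lift from a quasi-isomorphism of chain complexes of free-type Mackey functors to a $\GG$-spectrum equivalence that respects the cofiber structure; this rests on the general principle that for $\GG$-CW complexes whose cells all have a single orbit type $\GG/K$, the Bredon homology with $\uZ$-coefficients is computed via chain complexes of the free Mackey functor on $\GG/K$, which is projective, so that quasi-isomorphism among such complexes yields chain homotopy equivalence and hence equivalence of the associated $H\uZ$-smash spectra.
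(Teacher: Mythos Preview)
Your approach is essentially the same as the paper's. Both arguments pass through the cofibre sequences
\[
{\GG/C_{m'}}_+ \xrightarrow{\,1-g^{k}\,} {\GG/C_{m'}}_+ \to S(\lambda^{k})_+, \qquad S(\lambda^{k})_+ \to S^0 \to S^{\lambda^{k}}
\]
(with $m'=\gcd(k,m)$; the paper normalises to $k\mid m$ so that $m'=k$) and their analogues for $\lambda^{dk}$, and the content in both cases is that after smashing with $H\uZ$ the attaching maps $1-g^{k}$ and $1-g^{dk}$ yield equivalent cofibres. The paper states this tersely as ``$H\uZ \wedge g^k \simeq H\uZ \wedge g^{dk}$'' and reads off the equivalence of cofibres directly, while you unfold the same step into a chain-level argument: the two-term cellular complexes of projective $\uZ$-Mackey functors have identical homology and are therefore chain homotopy equivalent. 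Your formulation is a bit more explicit about the homological algebra underlying the key step; on the other hand, the paper's direct comparison of the attaching maps makes the passage to the outer cofibre $S^{\lambda^k}$ immediate, whereas in your version you must still argue that the chain homotopy equivalence can be chosen compatibly with the augmentation to $\uZ$. That last point deserves one more sentence of justification: two chain maps from a bounded-below complex of projectives to the complex $\uZ$ concentrated in degree $0$ are chain homotopic once they agree on $H_0$, so the required compatibility is automatic up to a sign that can be absorbed.
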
	
	
	\begin{proof}
		It suffices to assume $k\mid m$,  so that $g^k$ generates the subgroup isomorphic $C_{m/k}$ of $\GG$. The spaces $S^{\lambda^k}$ and $S^{\lambda^{dk}}$ fit into cofibre sequences of spectra as follows 
		\[
		{\GG/C_k}_+ \stackrel{1-g^k}{\to} {\GG/C_k}_+ \to S(\lambda^k)_+, ~~ S(\lambda^k)_+\to S^0 \to S^{\lambda^k},
		\]
		\[
		{\GG/C_k}_+ \stackrel{1-g^{dk}}{\to} {\GG/C_k}_+ \to S(\lambda^{dk})_+, ~~ S(\lambda^{dk})_+\to S^0 \to S^{\lambda^{dk}}.
		\]
		The crucial observation here is that as maps $H\uZ \wedge {\GG/C_k}_+ \to H\uZ \wedge {\GG/C_k}_+$, $H\uZ \wedge g^k \simeq H\uZ \wedge g^{dk}$. It follows that $H\uZ \wedge S(\lambda^k)_+ \simeq H\uZ \wedge S(\lambda^{dk})_+$, and hence, $H\uZ \wedge S^{\lambda^k} \simeq H\uZ \wedge S^{\lambda^{dk}}$. 
	\end{proof}	
	
	We may observe from Proposition \ref{smash-rep} that $S^{\lambda^k-\lambda^{dk}} \wedge H\uZ \simeq H\uZ$. This means in the graded commutative ring $\pi_\bigstar H\uZ$ (graded over $RO(\GG)$), there are invertible classes in degrees $\lambda^k - \lambda^{dk}$ whenever $(d,m)=1$. As a consequence the ring $\pi_\bigstar H\uZ$ is determined from it's values at the gradings which are linear combinations of $\lambda^k$ for $k\mid m$. We recall the following computation of $\upi_\bigstar^{C_p}(H\uZ)$ \cite[Appendix B]{Fer99}.
\begin{myeq}\label{cp-comp}
 \upi_{\alpha}^{C_p}(H \uZ) = \begin{cases} \uZ & \text{if} \; |\alpha| =0, \; |\alpha^{C_p}| \geq 0 \\
 \uZ^* & \text{if} \; |\alpha| =0, \; |\alpha^{C_p}| < 0 \\
 \langle \Z/p \rangle & \text{if}\;  |\alpha| <0, \; |\alpha^{C_p}| \geq 0, \; \text{and} \; |\alpha| \; \text{even} \\ 
\langle \Z/p \rangle & \text{if} \;|\alpha|>0, \; |\alpha^{C_p}| <-1,\; \text{and} \; |\alpha| \; \text{odd} \\ 
0 & \text{otherwise.} \end{cases}
\end{myeq}

	The homology decomposition theorems for $\GG$-spaces with even cells require proving that certain odd degree homotopy groups of $H\uZ$ are $0$. In this paper, we use the following result. 
	\begin{thm}\label{thm:BG-C_m}
		Let $ \alpha\in RO(\GG) $ be such that $ |\alpha| $ is odd, and $|\alpha^H|>-1$ implies $|\alpha^K|\ge -1 $ for all subgroups $ K\supseteq H $. Then $ \upi_{\alpha}^{\GG}(H\uZ) =0$.
	\end{thm}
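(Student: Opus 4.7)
I will prove Theorem \ref{thm:BG-C_m} by induction on the order $m=|\GG|$. For the base case $\GG=C_p$, one inspects the computation \eqref{cp-comp}: the only row yielding a nonzero group with $|\alpha|$ odd is ``$|\alpha|>0$ odd and $|\alpha^{C_p}|<-1$''. However, the hypothesis applied to $H=\{e\}$ (i.e., to $|\alpha|$ itself) gives $|\alpha|>-1 \Rightarrow |\alpha^{C_p}|\geq -1$, precisely ruling out this row. If instead $|\alpha|$ is odd and negative, no row of \eqref{cp-comp} applies. Hence $\upi_\alpha^{C_p}(H\uZ)=0$.

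For the inductive step, the key preliminary observation is that the hypothesis on $\alpha$ with respect to $\GG$ restricts to give the analogous hypothesis on $\alpha|_H$ with respect to any $H\leq \GG$. Hence by induction $\pi^H_{\alpha|_H}(H\uZ)=0$ for every proper $H$, so the Mackey functor $\upi_\alpha(H\uZ)$ is concentrated at level $\GG/\GG$ and every restriction map $\res^\GG_K$ to a proper level is zero. Invoking Remark \ref{cohomological} gives $\tr^\GG_K\res^\GG_K=[\GG:K]$ on $\pi_\alpha^\GG(H\uZ)$; since this composite is zero, the integer $[\GG:K]$ annihilates $\pi_\alpha^\GG(H\uZ)$ for each proper $K\leq \GG$. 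When $m$ has two distinct prime divisors $p$ and $q$, the subgroups of indices $p$ and $q$ are proper and yield coprime annihilators, so $\pi_\alpha^\GG(H\uZ)=0$ as desired.

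The remaining case is $m=p^n$ with $n\geq 2$, where the above only yields that $\pi_\alpha^\GG(H\uZ)$ is $p$-torsion. Here I would invoke the cofibre sequence
\[
S(\lambda^{p^{n-1}})_+\to S^0\to S^{\lambda^{p^{n-1}}}.
\]
The first term admits a $\GG$-CW structure with cells of type $\GG/C_{p^{n-1}}$, so the induced long exact sequence upon smashing with $H\uZ\wedge S^\alpha$ relates $\pi_\alpha^\GG(H\uZ)$ to $\pi_{\alpha-\lambda^{p^{n-1}}}^\GG(H\uZ)$ and to groups of the form $\pi_*^{C_{p^{n-1}}}(H\uZ)$; combined with Proposition \ref{smash-rep} and a secondary induction on the number of nontrivial irreducible summands of $\alpha$, this should close out the argument. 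The main obstacle lies in this prime-power case, where the purely algebraic Mackey-functor argument leaves residual $p$-torsion that must be eliminated geometrically, and where one must carefully verify that the relevant representation-theoretic shifts preserve enough of the original hypothesis to continue the recursion.
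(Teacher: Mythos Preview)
Your argument coincides with the paper's wherever it is complete. The paper, too, observes (implicitly) that the hypothesis on $\alpha$ is inherited by restrictions, assumes the result for all proper subgroups, and then for $m$ divisible by two distinct primes $p,q$ uses $\tr^\GG_{C_{m/p}}\res^\GG_{C_{m/p}}=p$ and $\tr^\GG_{C_{m/q}}\res^\GG_{C_{m/q}}=q$ (Remark \ref{cohomological}) to annihilate $\pi_\alpha^\GG(H\uZ)$ by coprime integers. Your $C_p$ base case via \eqref{cp-comp} is also correct.

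The gap is exactly where you flag it: $\GG=C_{p^n}$ with $n\geq 2$. The paper does not prove this case either; it imports it from \cite[Proposition~4.3]{BG21} as a black box, so the entire content of the theorem beyond the coprime-index trick lies in that citation. Your sketch via the cofibre sequence of $S^{\lambda^{p^{n-1}}}$ does not close as stated. First, a ``secondary induction on the number of nontrivial irreducible summands of $\alpha$'' is not well-posed for virtual $\alpha\in RO(\GG)$, whose coefficients may be negative. Second, and decisively, the shift $\alpha\mapsto\alpha-\lambda^{p^{n-1}}$ does not preserve the fixed-point hypothesis: since $C_{p^{n-1}}$ acts trivially on $\lambda^{p^{n-1}}$, the shift subtracts $2$ from $|\alpha^H|$ for every \emph{proper} $H$ but leaves $|\alpha^{C_{p^n}}|$ unchanged. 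Concretely, take $\GG=C_{p^2}$ and $\alpha=2\lambda-1$: then $|\alpha|=3$, $|\alpha^{C_p}|=|\alpha^{C_{p^2}}|=-1$, and the hypothesis holds; but $\alpha-\lambda^{p}$ has $|\,\cdot\,|=1>-1$ while $|\,\cdot\,|^{C_p}=-3$, so the hypothesis fails and the recursion cannot continue. Resolving this prime-power case requires a genuinely different argument, which is the substance of the cited result in \cite{BG21}.
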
	
	
	\begin{proof}
		The proof relies on \cite[Proposition 4.3]{BG21}, where the same result is proved for the groups $C_{p^n}$ where $p$ is an odd prime. We now suppose that the result is true for all subgroups of $\GG$ and then prove it for $\GG$. In this way, the result will be true for all cyclic groups of odd order. 
		
		For an $\alpha$ satisfying the hypothesis of the theorem, let $x\in \pi_\alpha^\GG(H\uZ)$. As the result has already been proved at prime powers we may assume that $m$ is divisible by at least two distinct primes $p$ and $q$. The result is also true for $C_{m/p}$ and $C_{m/q}$ by the hypothesis. We then compute
		\[
		px= [\GG: C_{m/p}] x= \tr^\GG_{C_{m/p}} \res^\GG_{C_{m/p}} (x) = 0, 
		\]
		as $\pi_\alpha^{C_{m/p}}(H\uZ)=0$, and analogously, 
		\[
		qx= [\GG: C_{m/q}] x= \tr^\GG_{C_{m/q}} \res^\GG_{C_{m/q}} (x) = 0. 
		\]
		It follows that $x=0$. 
	\end{proof}
	
	Theorem \ref{thm:BG-C_m} is useful to prove that cohomology of $\GG$-spaces which are constructed by attaching even cells of the type $D(V)$,   is a free module over the cohomology of a point $\uH^\bigstar_\GG(S^0;\uZ)$. Such results have been proved in \cite{Lew88}, \cite{BG19}, \cite{BG20}, in the context of equivariant projective spaces and Grassmannians. A more careful argument has also been used in \cite{Fer99} and \cite{FL04}, where the free module property has been proved for all finite complexes obtained by attaching cells of the type $D(V)$ in even dimensions.  
	
	\section{Equivariant homology decompositions  for tree manifolds } \label{sec:4}
	In this section, we obtain homology decompositions for the tree manifolds defined in Section \ref{sec:2}. Recall that, $ \C P^2(a,b;m) $ serves as a building block for these manifolds. We describe a cellular decomposition of complex projective spaces, which has been studied along with cohomology of such spaces   in \cite[\S 3]{Lew88} and \cite[\S 8.1]{BG19}.
	
	\begin{mysubsection}{Cellular filtration of projective spaces}
		The equivariant complex projective space $ P(V) $ is built up by attaching even dimensional cells of the type $ D(W) $ for real representations $W$. To see this, let $ V_n $ be a  $ \GG $-representation  that decomposes in terms of irreducible factors as $ V_n=\sum_{i=0}^n \phi_i $, and  let $ W_n $ denote the $ \GG $-representation $ \phi_n^{-1}\otimes  \sum_{i=0}^{n-1} \phi_i $. Consider the $ \GG $-equivariant map $ D(W_n)\to P(V_n)\cong P(\phi_n^{-1}\otimes V_{n}) $  defined by
		\[
		(z_0, z_1,\dots, z_{n-1})\mapsto [z_0, z_1,\dots, z_{n-1}, 1-\sum_{i=0}^{n-1}|z_i^2| ],
		\]
		where $ z_i\in \phi_n^{-1}\otimes \phi_i $. Restricting this map to $ S(W_n) $, we see that its image  lies in $ P(V_{n-1}) $ (which may be regarded as  a subspace of $ P(V_n) $ in the obvious way), and it is a homeomorphism from $ D(W_n)\setminus S(W_n) $ to $P(V_n)\ssm P(V_{n-1})$.
		Thus, $ P(V_n) $ is obtained from $ P(V_{n-1}) $ by attaching the cell $ D(W_n) $ along this boundary map.  
		Observe that this filtration depends on the choice of the ordering of the $ \phi_i $'s.\par

		Returning to our example $ \C P^2(a,b;m) = P(\lambda^a\oplus\lambda^b \oplus 1_\C)$, we see that there are six possible ways to build it. This choices will play a crucial role in proving the homology decomposition theorems, as we will see below.
		\begin{exam}\label{cp2homdec}
			Writing $\C P^2(a,b;m)= P(\lambda^a\oplus\lambda^b\oplus 1_\C) $ in this order, the cellular filtration above gives us the following cofibre sequence (using the fact that $P(\lambda^a\oplus\lambda^b) \cong S^{\lambda^{a-b}}$)
			\[
			S^{\lambda^{a-b}} \to \C P^2(a,b;m) \to S^{\lambda^a + \lambda^b}. 
			\]
			Using the other orderings, we also obtain the following cofibre sequences 
			\[
			S^{\lambda^{a}} \to \C P^2(a,b;m) \to S^{\lambda^{a-b} + \lambda^{-b}},~~S^{\lambda^b} \to \C P^2(a,b;m) \to S^{\lambda^{b-a} + \lambda^{-a}}. 
			\]
			The homology decomposition is obtained by smashing these cofibre sequences with $H\uZ$ and trying to prove a splitting. For example, in the cofibre sequence
			\begin{myeq}\label{cofseqcp2}
				H\uZ\wedge S^{\lambda^{a-b}} \to H\uZ\wedge  \C P^2(a,b;m) \to H\uZ \wedge S^{\lambda^a + \lambda^b}, 
			\end{myeq}
			the connecting map $H\uZ \wedge S^{\lambda^a + \lambda^b} \to H\uZ\wedge S^{\lambda^{a-b}+1}$ is a $H\uZ$-module map which is classified up to homotopy by $\pi_0^\GG (H\uZ \wedge S^{\lambda^{a-b}+1 - \lambda^a - \lambda^b})$. This group is now analyzed using Theorem \ref{thm:BG-C_m} at $\alpha = -\lambda^{a-b} - 1 + \lambda^a + \lambda^b$. Observe that $|\alpha|= 1 >0$, so in order to show $\pi_\alpha^\GG H\uZ=0$, we need $|\alpha^{C_d}|\geq -1$ for all $d \mid m$. Under the condition $\gcd(a,b,m)=1$, this is true if and only if $a-b$ is relatively prime to $m$, and in this case, 
			\[
			H\uZ \wedge \C P^2(a,b;m) \simeq H\uZ \wedge S^{\lambda^{a-b}} \bigvee H\uZ \wedge S^{\lambda^a+\lambda^b} .
			\]
			Using the other two cofibre sequences for $\C P^2(a,b;m)$, we see that a homology decomposition is obtained if one of $a$, $b$, or $a-b$ is relatively prime to $m$. 
		\end{exam}
	\end{mysubsection}
	
	\vspace*{0.4cm}	
	
	In the case of connected sums, we carry forward the homology decomposition argument of Example \ref{cp2homdec}. We illustrate this in the following example.
	\begin{exam}\label{exconnsum}
		Let  $X= \C P^2(a,b;m) \#  \C P^2(a',b';m)$ where $ \gcd(a'-b',m)=1 $ and $\gcd(a-b,m)=1$. We assume that the connected sum point $p$ in $\C P^2(a',b';m)$ has tangential representation $\lambda^{a'}\oplus\lambda^{b'}$ as in 1) of Proposition \ref{cond-conn-sum}. Example \ref{cp2homdec} shows that 
		\[
		H\uZ\smas  \C P^2(a,b;m)\simeq  H\uZ\smas  S^{\lambda^{a}+ \lambda^{b}}\bigwedg H\uZ\smas 	{S^\lambda}^{a-b}.
		\]
		
		To compute $H\uZ \wedge X$, we use the cofibre sequence \eqref{conn-sum-cof}. We note that	 
		\[
		\C P^2(a',b';m)\ssm \{p\}\simeq P(\lambda^{a'}\oplus \lambda^{b'})\simeq {S^{\lambda}}^{a'-b'}.
		\]
		Therefore we obtain a cofibre sequence of $H\uZ$-modules
		\begin{equation*}
			H\uZ \smas {S^\lambda}^{a'-b'} \to  H\uZ\wedge X  \to  H\uZ\smas  S^{\lambda^{a}+ \lambda^{b}}\bigwedg H\uZ\smas 	{S^\lambda}^{a-b}.
		\end{equation*}
		The hypothesis $\gcd (a'-b',m) =1$ implies using Theorem \ref{thm:BG-C_m} that the above  sequence splits. Consequently, we obtain
		\[
		H\uZ\smas X_+\simeq H\uZ\smas  H\uZ\smas  S^{\lambda^{a}+ \lambda^{b}}\bigwedg H\uZ\smas 	{S^\lambda}^{a-b}\bigwedg H\uZ\smas {S^\lambda}^{a'-b'}.
		\]
	\end{exam}

	\vspace*{0.4cm}
	
	We now prove the main theorems of this section. Example \ref{cp2homdec} points out the necessity of the hypothesis in the theorem. 
	\begin{thm}\label{thm:gen}
		If $ \T $  is an  admissible weighted tree of type I with  $ \GG $-action such that for all vertices $ v \in \T_0$ with  $w(v)= (a_v,b_v;m_v) $,  $\gcd(a_v-b_v,m_v)=1$, then, the $ H\uZ$-module  $ H\uZ\smas X(\T)_+ $ admits the decomposition  
\[
		H\uZ\smas X(\T)_+\simeq
			H\uZ\bigwedg H\uZ \smas S^{\lambda ^{a_0}+ \lambda^{b_0}} \underset{\T_0}{\bigvee} H\uZ\wedge S^\lambda \underset{[\mu]\in\T_d/\GG, d\neq m}{\bigvee} { H\uZ \smas \GG/C_d}_+\smas S^{\lambda^{a_\mu-b_\mu}} 
		\]
where $w(v_0)= (a_0,b_0;m) $. If $\T$ is of  type II, 
		\[
		H\uZ\smas X(\T)_+\simeq
			H\uZ\bigwedg H\uZ \smas S^{\lambda ^{a_0}+ \lambda^{b_0}}\underset{[\mu]\in\T_d/\GG,~ d\neq m}{\bigvee} { H\uZ \smas \GG/C_d}_+\smas {S^{\lambda^{a_\mu-b_\mu}}}. 
		\]	
	\end{thm}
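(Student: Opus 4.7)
The plan is to induct on $n(\T)$ by peeling off one leaf-orbit at a time via the cofibre sequences \eqref{conn-sum-cof} and the orbit-wise pushout \eqref{orb-conn-sum-cof}, showing at each step that the resulting cofibre sequence splits after smashing with $H\uZ$. The base case $\T=\{v_0\}$ is Example~\ref{cp2homdec} combined with Proposition~\ref{smash-rep} (which turns $S^{\lambda^{a_0-b_0}}$ into $S^\lambda$ using the hypothesis $\gcd(a_0-b_0,m)=1$) in type~I, and is immediate from $X(\T)\cong S^{\lambda^{a_0}+\lambda^{b_0}}$ in type~II. For the inductive step, pick a $\GG$-orbit $[v]$ of maximum level in $\T$ and let $\T'$ be the admissible subtree obtained by deleting $[v]$; the hypothesis $\gcd(a_u-b_u,m_u)=1$ on fixed vertices of $\T$ restricts to the same hypothesis on $\T'$, so the inductive conclusion applies to $\T'$.

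Suppose first $[v]=\{v\}$ is a non-root fixed leaf (Case~A). Then $X(\T)=X(\T')\#\C P^2_v$, and the identification $\C P^2_v\setminus\{[0,0,1]\}\simeq P(\lambda^{a_v}+\lambda^{b_v})\simeq S^{\lambda^{a_v-b_v}}$ gives the $H\uZ$-module cofibre sequence
\[
H\uZ\wedge S^{\lambda^{a_v-b_v}}\to H\uZ\wedge X(\T)\to H\uZ\wedge X(\T').
\]
Splitting this reduces to showing the connecting map $H\uZ\wedge X(\T')\to H\uZ\wedge\Sigma S^{\lambda^{a_v-b_v}}$ is null on every summand of the inductive decomposition of $H\uZ\wedge X(\T')$. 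Via the Wirthm\"uller isomorphism (for the orbit summands), each component lives in a group $\upi_{\alpha}^H(H\uZ)$ for a subgroup $H\le\GG$ and some $\alpha=W-1-\lambda^{a_v-b_v}\big|_H$, where $W$ ranges over the finitely many summand-types $\{0,\,\lambda^{a_0}+\lambda^{b_0},\,\lambda^{a_u-b_u},\,\lambda^{a_u}\}$. The hypothesis $\gcd(a_v-b_v,m)=1$ makes $\lambda^{a_v-b_v}$ faithful as a $\GG$-representation, hence still faithful when restricted to any subgroup $C_d\le\GG$ (since $d\mid m$); a finite check then verifies that $|\alpha|$ is odd and that the fixed-point monotonicity hypothesis of Theorem~\ref{thm:BG-C_m} holds, yielding the vanishing. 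Proposition~\ref{smash-rep} identifies the new summand as $H\uZ\wedge S^\lambda$, matching the expected contribution of $v$ to $\bigvee_{\T_0}H\uZ\wedge S^\lambda$.

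If $[v]$ has stabilizer $C_d$ with $d<m$ (Case~B), condition~(7)(b) of Definition~\ref{adm-wt-tree} forces $b_v=0$, so $\C P^2_v\setminus\{[0,0,1]\}\simeq S^{\lambda^{a_v}}$ as a $C_d$-space, and the pushout~\eqref{orb-conn-sum-cof} produces
\[
H\uZ\wedge (\GG/C_d)_+\wedge S^{\lambda^{a_v}}\to H\uZ\wedge X(\T)_+\to H\uZ\wedge X(\T')_+.
\]
The same strategy applies: Wirthm\"uller reduction turns the vanishing of the connecting map into vanishing of $\upi^H_\alpha(H\uZ)$ for subgroups $H\le C_d$, and Theorem~\ref{thm:BG-C_m} closes the argument. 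The type~II statement is proved identically; the only simplification is that a type~II tree has no non-root fixed vertices, so only Case~B arises in the induction and the decomposition naturally lacks the $\bigvee_{\T_0}H\uZ\wedge S^\lambda$ wedge.

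The main obstacle is the systematic verification of the parity/monotonicity hypotheses of Theorem~\ref{thm:BG-C_m} for each pair $(W,V)$ that shows up in the induction: one enumerates the summand types $W$, writes out the virtual fixed-point dimensions $|\alpha^H|$ for $\alpha=W-1-V$ across the subgroup lattice, and checks that wherever $|\alpha^H|>-1$, the inequality $|\alpha^K|\ge -1$ persists for all $K\supseteq H$. The divisibility assumption $\gcd(a_v-b_v,m_v)=1$ on fixed vertices is precisely what makes this verification go through uniformly, since it forces $\lambda^{a_v-b_v}$ to be fixed-point free under every proper subgroup, making the required dimensional inequalities automatic.
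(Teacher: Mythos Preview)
Your overall architecture matches the paper's: induct by peeling off a leaf orbit, produce the cofibre sequence, and kill the connecting map summand-by-summand using Theorem~\ref{thm:BG-C_m}. Case~A is fine. The gap is in Case~B, specifically the connecting map out of the summand $H\uZ\wedge S^{\lambda^{a_0}+\lambda^{b_0}}$.

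After Wirthm\"uller, that component lives in $\pi^{C_d}_\alpha(H\uZ)$ with $\alpha=(\lambda^{a_0}+\lambda^{b_0}-\lambda^{a_v-b_v}-1)|_{C_d}$. Since $|\alpha^e|=1>-1$, Theorem~\ref{thm:BG-C_m} requires $|\alpha^K|\ge -1$ for \emph{every} $K\le C_d$; equivalently, no $|K|$ may divide $a_v-b_v$ while dividing neither $a_0$ nor $b_0$. Nothing in your argument rules this out: the hypothesis $\gcd(a_u-b_u,m_u)=1$ concerns only \emph{fixed} vertices, so it says nothing directly about $a_v-b_v$ when $v$ is a non-fixed leaf, and your appeal to condition~(7)(b) is itself unjustified when $v$'s parent has the same stabilizer $C_d$ (then (7)(a) applies and $b_v$ need not vanish). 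The paper closes this gap by a tree-traversal: one shows that if $m_v<m$ then $m_v$ must divide $a_0$ or $b_0$. The route is to climb from $v$ to the nearest fixed ancestor $w$; condition~(7)(b) forces $m_v$ to divide one of $a_w,b_w,a_w-b_w$, the hypothesis on fixed vertices eliminates the third option, and then repeated use of (7)(a) along the fixed path from $w$ to $v_0$ propagates the divisibility down to $(a_0,b_0)$. Once $m_v\mid a_0$ or $m_v\mid b_0$ is established, every $|K|\le m_v$ divides $a_0$ or $b_0$ and the monotonicity check becomes automatic. This tree argument is the missing idea; without it your ``finite check'' does not go through for the top-cell summand.
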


	\begin{proof}
		We proceed by induction on $L(\T)$, the maximum level reached by vertices of the tree.
		The induction starts from a tree with only the root vertex. In the type I case, this is computed in Example \ref{cp2homdec}.  	
		In case of type II, the manifold is $  S^4(a_0,b_0;m) $, for which we have the following decomposition
		\[
		H\uZ \smas  S^4(a_0,b_0;m)_+\simeq H\uZ\bigwedg H\uZ \smas S^{\lambda ^{a_0}+ \lambda^{b_0}}.
		\]
		Assume that the statement holds whenever $L(\T)\leq L $. We prove it for trees with $L(\T)=L+1$. Given a tree $\T$ we denote by $ \T(L) $ the part of it up to level $L$, so that the result holds for $X(\T(L))$. We attach orbits  of the level  $ L+1 $ vertices one at a time. 
		We write down the argument for a type I tree, as the other case is entirely analogous.  Let $ \mathcal{O}_1,\dots, \mathcal{O}_k  $ denote the orbits of the  level $ L+1 $ vertices. It suffices to prove the case when an orbit $ \OO_i $ is added to $  \T(L)$ together with the attaching edges, which we denote by $ \T(L)+ \OO_i $. The  stabilizer for the vertices in $ \OO_i $ can be either the whole group  $ \GG $ or a smaller  subgroup $ C_d $. We deal these cases separately.\par
		{\it Case  1}:  The stabilizer for the vertices in $ \OO_i $ is $ \GG $, that is, $ \OO_i= \{v_i\}$. Suppose $w(v_i)= (a_i,b_i;m) $. By Proposition \ref{tree-mfld-const}, this implies that the tangential representation at the connected sum point of  $\C P^2_{ v_i} $ is $ {\lambda^{a_i}\oplus \lambda^{b_i}} $. We have the following cofibre sequence from \eqref{conn-sum-cof}
		\begin{equation*}
			\C P^2(a_i,b_i;m)\setminus D(\lambda^{a_i}\oplus \lambda^{b_i}) \to X\big(\T(L)+ \OO_i\big)\to X\big(\T(L)\big).
		\end{equation*}
		The left hand term  can be simplified further as 
		\[
		\C P^2(a_i,b_i;m)\setminus D(\lambda^{a_i}\oplus \lambda^{b_i})\simeq P(\lambda^{a_i}\oplus \lambda^{b_i})\simeq {S^{\lambda}}^{a_i-b_i}.
		\]
		We now apply Proposition \ref{smash-rep} to note that $ {H\uZ\smas  S^\lambda}^{a_i-b_i} \simeq { H\uZ\smas  S^\lambda}  $. Applying the induction hypothesis on $ X\big(\T(L)\big) $, we get a cofibre sequence of 
		$ H\uZ$-modules
		\begin{myeq}\label{eq:ind-gen-stab-G}
			{H\uZ\smas S^\lambda} \to  H\uZ\smas  X\big(\T(L)+ \OO_i\big)\to  H\uZ \smas S^{\lambda ^{a_0}+ \lambda^{b_0}}\bigwedg_{ \T(L)_d/\GG} { H\uZ \smas \GG/C_d}_+\smas {S^{\lambda}}.
		\end{myeq}
		Next we observe that the cofibre sequence splits by showing that up to homotopy, the connecting map  from each summand  of the right hand side of equation \eqref{eq:ind-gen-stab-G}  to $   H\uZ\smas  S^{\lambda+1} $ is zero. This follows from Theorem \ref{thm:BG-C_m}, and this gives the required homology decomposition for $X(\T(L) +\OO_i)$.\par
		{\it 	Case 2}: The vertices in $ \OO_i $ have  stabilizer $ C_{m_i} <\GG$ and   $w(v_i)= (a_i,b_i;m_i) $. In this case, we are considering the connected sum of the form $ X\big(\T(L)\big) \# \GG\times_{C_{m_i}} \C P^2(a_i,b_i;m_i) $. 	Consider the following homotopy pushout square of $ \GG $-spaces \eqref{orb-conn-sum-cof}
		\begin{equation*}
			\begin{tikzcd}
				\GG\times_{C_{m_i}} {S^\lambda}^{a_i-b_i} \arrow[r] \arrow[d]  &  \GG\times_{C_{m_i}} C({S^\lambda}^{a_i-b_i}) \arrow[d] \\
				X\big(\T(L)+ \OO_i\big) \arrow[r]           &           X\big(\T(L)\big),
			\end{tikzcd}
		\end{equation*}
		where $ C({S^\lambda}^{a_i-b_i}) $ denotes the cone of ${S^\lambda}^{a_i-b_i}  $. In $\GG$-spectra, this gives rise to the cofibre sequence
		\[	
		\GG/{C_{m_i}} _+\smas {S^\lambda}^{a_i-b_i}_+ \to  X\big(\T(L)+ \OO_i\big)_+ \bigwedg \GG/{C_{m_i}} _+ \to  X\big(\T(L)\big)_+.
		\]
		We use 
		\[
		\GG/{C_{m_i}} _+\smas {S^\lambda}^{a_i-b_i}_+ \simeq (\GG/{C_{m_i}}_+\smas {S^\lambda}^{a_i-b_i})\bigwedg \GG/{C_{m_i}} _+,
		\]
		to deduce the  cofibre sequence 
		\[ 
		\GG/{C_{m_i}} _+\smas {S^\lambda}^{a_i-b_i} \to  X\big(\T(L)+ \OO_i\big)_+  \to  X\big(\T(L)\big)_+.
		\]
	Now we take the smash product with $H\uZ$ to get the following cofibre sequence of $ H\uZ$-modules
		\begin{myeq}\label{eq:ind-gen-sm-stab}
			H\uZ\smas \GG/{C_{m_i}} _+\smas {S^\lambda}^{a_i-b_i}
			\to  H\uZ\smas  X\big(\T(L)+ \OO_i\big)_+
			\to H\uZ\smas X(\T(L))_+.
		\end{myeq}
Note that the summands of the right hand side of the form $H\uZ \wedge S^\lambda$ and $H\uZ \wedge {\GG/C_d}_+ \wedge S^{\lambda^{a_\mu-b_\mu}}$ support a trivial connecting map using Theorem \ref{thm:BG-C_m}, and the facts \\
a) $|(\lambda^r-\lambda^s-1)^K|> -1$ if $|K|\mid r$ but not $s$. \\
b) $|(\lambda^r-\lambda^s-1)^K|< -1$ if $|K|\mid s$ but not $r$. 

We now note that if the stabilizer $C_{m_v}$ of a vertex  $v$ satisfies $m_v < m$, then $m_v$ must divide $a_0$ or $b_0$ under the given hypothesis. In the case of type II trees, this is clear from (7)(b) of Definition \ref{adm-wt-tree}  as the maximum value of $m_v$ is reached among the vertices at level 1. For a type I tree, the analogous role is played by vertices $v$ with $m_v<m$ that are joined to a vertex $w$ of $\T_0$. The same condition now implies that one of $a_w$, $b_w$, $a_w-b_w$ is divisible by $m_v$. The hypothesis rules out the third case. Now we repeatedly apply condition (7)(a) of Definition \ref{adm-wt-tree} along the path from $w$ to the root vertex $v_0$ with the hypothesis ruling out the fact that $m_v$ divides $a_u - b_u$ for any vertex $u$ along the path. It follows that $m_v$ divides either $a_u$ or $b_u$ for every vertex along this path. Therefore, $m_v$ divides either $a_0$ or $b_0$. Now by Theorem \ref{thm:BG-C_m} using the fact that $m_i$ divides either $a_0$ or $b_0$, the connecting map on the summand $H\uZ \wedge S^{\lambda^{a_0}+\lambda^{b_0}}$ is $0$.     
%
		This completes the proof.
	\end{proof}
	
\begin{rmk}
Observe that Theorem \ref{thm:gen} has no hypothesis if the tree $\T$ is of type II. Henceforth, we prove further results for trees of type I. The hypothesis in Theorem \ref{thm:gen} is required crucially in the proof. For example, observe that if $ a_0-b_0\equiv 0\pmod m $, then the the cofibre sequence \eqref{cofseqcp2}  gives rise to the connecting map
	\[
	H\uZ \smas S^{\lambda^{a_0}+ \lambda^{b_0}}\xrightarrow{}  H\uZ \smas S^3, 
	\] 
	which  is determined by 
	\[
	\pi_{\lambda^{a_0}+\lambda^{b_0}-3}^\GG(H\uZ).
	\]
	This  group may be non-zero.
\end{rmk}

\vspace*{0.4cm}

 In the following theorem, we observe that if one of the rotation numbers at the root vertex is $ 0 $, then we obtain a decomposition result with no further hypothesis on the weights.	
	
	\begin{thm}\label{thm:one-zero}
		Let $ \T $ be an admissible weighted tree with $ \GG $-action of type I such that for the root vertex $ v_0 $ with  $ w(v_0)=(a_0,b_0;m) $, one of $ a_0 $ or $ b_0 $ is zero.  Then, $ H\uZ\smas X(\T)_+ $ admits the following decomposition
		\[
		H\uZ\smas X(\T)_+\simeq
			H\uZ\bigwedg H\uZ \smas S^{\lambda +2}\underset{[\mu]\in \T_d/\GG}{\bigvee} { H\uZ \smas \GG/C_d}_+\smas {S^{\lambda^{a_\mu-b_\mu}}}.
		\]
	\end{thm}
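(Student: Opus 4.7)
The plan is to induct on the level $L(\T)$, following the template of the proof of Theorem \ref{thm:gen}. The new ingredient is the base case, which exploits the hypothesis $b_0=0$ to generate an $S^{\lambda+2}$ summand rather than $S^{\lambda^{a_0}+\lambda^{b_0}}$. For $L(\T)=0$, we have $X(\T)=\C P^2(a_0,0;m)=P(1_\C\oplus \lambda^{a_0}\oplus 1_\C)$ with $\gcd(a_0,m)=1$. I would apply the cellular filtration of Section \ref{sec:4} with the ordering $(\phi_0,\phi_1,\phi_2)=(1_\C,\lambda^{a_0},1_\C)$, giving $W_1=\lambda^{-a_0}$ and $W_2=1_\C+\lambda^{a_0}$, hence the cofibre sequence
\[
S^{\lambda^{-a_0}}\lra \C P^2(a_0,0;m)\lra S^{2+\lambda^{a_0}}.
\]
Smashing with $H\uZ$ and invoking Proposition \ref{smash-rep}, the connecting map $H\uZ\wedge S^{\lambda+2}\to H\uZ\wedge S^{\lambda+1}$ is classified by $\pi_1^\GG(H\uZ)=0$, so the sequence splits. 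Adding the disjoint basepoint yields $H\uZ\vee H\uZ\wedge S^\lambda\vee H\uZ\wedge S^{\lambda+2}$, which matches the claim (the $S^\lambda$ being the $d=m$ contribution of $v_0$ under the indexing $[\mu]\in\T_d/\GG$ after applying Proposition \ref{smash-rep} to $S^{\lambda^{a_0}}$).

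For the inductive step, assume the result for trees of level $\leq L$ and consider $\T$ with $L(\T)=L+1$. Writing $\T(L)$ for the sub-tree through level $L$, I would add orbits $\OO$ of level-$(L+1)$ vertices one at a time. When $\OO=\{v\}$ is $\GG$-fixed, the sequence \eqref{conn-sum-cof} gives
\[
H\uZ\wedge S^{\lambda^{a_v-b_v}}\lra H\uZ\wedge X(\T(L)+\OO)\lra H\uZ\wedge X(\T(L)),
\]
and when $\OO$ has proper stabilizer $C_{m_\OO}$, the sequence \eqref{orb-conn-sum-cof} yields
\[
H\uZ\wedge \GG/C_{m_\OO+}\wedge S^{\lambda^{a_\OO-b_\OO}}\lra H\uZ\wedge X(\T(L)+\OO)_+\lra H\uZ\wedge X(\T(L))_+,
\]
exactly as in the two cases of Theorem \ref{thm:gen}, but now without simplifying the left-hand term via Proposition \ref{smash-rep}. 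It remains to verify that each connecting map is null on every summand of the right-hand side provided by the inductive hypothesis.

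These obstructions lie in groups $\pi^{C_e}_\alpha(H\uZ)$ for $e\mid m$ and gradings $\alpha$ of one of three shapes: $-\lambda^s-1$, $1+\lambda-\lambda^s$, or $\lambda^r-\lambda^s-1$, each with odd underlying dimension. For the first shape, $|\alpha^{C_d}|\in\{-1,-3\}$ is always $\leq -1$, so the hypothesis of Theorem \ref{thm:BG-C_m} is vacuous. For the second, $|\alpha^{C_d}|\in\{-1,1\}$ is always $\geq -1$, so the hypothesis holds trivially. For the third, $|(\lambda^r-\lambda^s-1)^{C_d}|=2[d\mid r]-2[d\mid s]-1$, and the hypothesis reduces to the following divisibility observation: if $|\alpha^{C_{d_H}}|>-1$ then $d_H\mid r$ and $d_H\nmid s$, and for any $d_K$ with $d_H\mid d_K$, having $d_K\mid s$ would force $d_H\mid s$, a contradiction; hence $d_K\nmid s$ and $|\alpha^{C_{d_K}}|\geq -1$. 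Thus Theorem \ref{thm:BG-C_m} kills all obstructions. The main obstacle is selecting the correct filtration in the base case: exploiting $b_0=0$ to isolate a trivial factor at the top of $V$ produces the characteristic $S^{\lambda+2}$ cell, after which the inductive machinery of Theorem \ref{thm:gen} extends verbatim, the divisibility observation removing the need for the hypothesis $\gcd(a_v-b_v,m_v)=1$ on fixed vertices.
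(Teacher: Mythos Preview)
Your proposal is correct and follows essentially the same approach as the paper: induction on $L(\T)$ with the same base-case cofibre sequence (yours has $S^{\lambda^{-a_0}}$ where the paper has $S^{\lambda^{a_0}}$, an immaterial difference), the same orbit-by-orbit inductive step split into the fixed and proper-stabilizer cases, and the same appeal to Theorem \ref{thm:BG-C_m} to kill the connecting maps. Your enumeration of the three obstruction shapes $-\lambda^s-1$, $1+\lambda-\lambda^s$, and $\lambda^r-\lambda^s-1$ with explicit verification of the fixed-point hypothesis is a bit more detailed than the paper, which treats the second and third shapes and leaves the proper-stabilizer case to ``analogous reasoning'', but the substance is identical.
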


	\begin{proof}
		We proceed by induction on $L(\T)$, the maximum level reached by the tree as in Theorem \ref{thm:gen}. We may assume $ b_0$ is zero, so we have, $ \gcd(a_0,m)=1 $. At the initial case $L(\T)=0$, $X(\T)=\C P^2(a_0,0;m)  $, and for this manifold,  the cellular decomposition gives us	the following cofibre sequence of $ H\uZ$-modules
		\begin{myeq}\label{eq:base-one-zero}
			H\uZ\smas 	{S^\lambda}^{a_0} \to H\uZ\smas   \C P^2(a_0,0;m) \to H\uZ\smas  S^{\lambda^{a_0}+ 2}.
		\end{myeq}
		Note that $ H\uZ \smas S^{\lambda^{a_0}}\simeq H\uZ \smas S^{\lambda} $. The connecting  map   in \eqref{eq:base-one-zero},
		\[ 
		H\uZ \smas S^{\lambda+ 2}\xrightarrow{} H\uZ \smas S^{\lambda+ 1}
		\] 
		is trivial up to homotopy,	hence the cofibre sequence  splits and we obtain
		\[ 
		H\uZ \smas  \C P^2(a_0,0;m)_+\simeq H\uZ\bigwedg H\uZ \smas S^{\lambda +2}\bigwedg H\uZ \smas {S^\lambda}.
		\]
		For the inductive step, assume the statement is true for the tree up to level $ L $, $ \T(L) $ and we attach one orbit $ \OO_i $ of the level  $ L+1 $ vertices to $ \T(L) $. We prove the case when the  stabilizer for the vertices in $ \OO_i $ is  $ \GG $. An analogous reasoning applies to the other cases.\par
		Suppose  $ \OO_i= \{v_i\}$ and  $w(v_i)= (a_i,b_i;m) $.  As in the proof of Theorem \ref{thm:gen}, we get the following cofibre sequence
		\begin{equation*}
			{S^{\lambda}}^{a_i-b_i} \to X\big(\T(L)+ \OO_i\big)\to X\big(\T(L)\big).
		\end{equation*}
		Applying the induction hypothesis on $ X\big(\T(L)\big) $, we get a cofibre sequence of 
		$ H\uZ$-modules
		\begin{myeq}\label{eq:ind-one-zero}
			{H\uZ\smas S^\lambda}^{a_i-b_i} \to  H\uZ\smas  X\big(\T(L)+ \OO_i\big)\to  H\uZ \smas S^{\lambda +2}\bigwedg_{{[\mu]\in\T(L)_d/\GG}} { H\uZ \smas \GG/C_d}_+\smas {S^{\lambda}}^{a_\mu-b_\mu}.
		\end{myeq}
		We claim that the connecting map  is zero from each summand  of the right hand side of the equation   to $  H\uZ\smas  S^{\lambda^{a_i-b_i}+1}$. For the first summand note that the group 
		\[  
		\pi^\GG_{\lambda+1-\lambda^{a_i-b_i}}(H\uZ)= 0 
		\]
		as it	satisfies the condition given in  Theorem \ref{thm:BG-C_m}.
		To show the map
		\[  
		{ H\uZ \smas \GG/C_d}_+\smas {S^{\lambda}}^{a_\mu-b_\mu}\to H\uZ\smas  S^{\lambda^{a_i-b_i}+1}
		\]
		is trivial consider the  group 
		\[  
		\pi_{\alpha}^{C_d}(H\uZ), \quad \alpha= \lambda^{a_\mu-b_\mu}-\lambda^{a_i-b_i}-1 
		\]
		Then $\alpabs{}=-1  $, and  for all subgroups $ C_j $, $ \alpabs{C_j}\le 1 $. Equality holds if and only if $ j\mid (a_\mu-  b_\mu) $ and $ j\nmid (a_i-b_i) $. Then for any subgroup $ C_k \supset C_j $, $ k $ does not divide $ (a_i-b_i) $. So $ \alpabs{C_k}\ge-1 $. Thus $ \alpha $ satisfies the condition given in Theorem \ref{thm:BG-C_m}.
		Hence the cofibre sequence in \eqref{eq:ind-one-zero} splits and we obtain the required decomposition. This completes the proof.
	\end{proof}

	\section{Homology decompositions for $ \GG=C_{p^n} $}\label{sec:5}
	In this section we derive homology decompositions for tree manifolds in the case $ \GG=C_{p^n} $ without any restriction on weight. We start with an example pointing out the need for a judicious choice of cellular filtration and later, we discuss how a  reorientation may help to solve this. As a result, we observe some dimension shifting phenomena among  the summands  in the homology decomposition.    
	\begin{exam}\label{exam:non-triv-conn}
		Let $ p\mid a-b $. We know from the cellular filtration of projective spaces (Example \ref{cp2homdec}) that
		$ \C P^2(a,b;p)=P(\lambda^a\oplus  1_\C\oplus\lambda^b) $ gives us the cofibre sequence
		\begin{equation*}
			H\uZ\smas 	S^{\lambda^a}\to H\uZ\smas  \C P^2(a,b;m) \to H\uZ\smas  S^{\lambda^{a-b}+\lambda^{-b}}\simeq H\uZ\smas S^{\lambda+2}.
		\end{equation*}
		Since the connecting map is zero, we obtain 
		\begin{myeq}\label{eq:ex-cp-split}
			H\uZ\smas  \C P^2(a,b;m)\simeq  H\uZ\smas  S^{\lambda+2}\bigwedg H\uZ\smas 	{S^\lambda}.
		\end{myeq}
		One may also write $ \C P^2(a,b;p)=P(\lambda^a\oplus\lambda^b\oplus  1_\C) $ 
		which yields the cofibre sequence
		\begin{myeq}\label{eq:ex-cp-nontriv-cof}
		H\uZ\smas S^{\lambda^{a-b}}	\simeq H\uZ\smas 	{S^2}\to H\uZ\smas  \C P^2(a,b;m) \to H\uZ\smas  S^{2\lambda}.
		\end{myeq}
		We claim that the connecting map of \eqref{eq:ex-cp-nontriv-cof} is non-zero. Suppose on the contrary that  the connecting map is trivial. Then we have the splitting
		\begin{myeq}\label{eq:ex-cp-nontriv-split}
			H\uZ\smas  \C P^2(a,b;m)\simeq H\uZ\smas 	S^{2\lambda} \bigwedg H\uZ\smas  S^{2}.
		\end{myeq}
		The Mackey functor
		\[ \
		\upi^{\cp}_{\lambda}(H\uZ\smas  \C P^2(a,b;m)) \]
		is isomorphic \eqref{cp-comp} to $ \uZ $ from \eqref{eq:ex-cp-split}, while isomorphic to $ \uZ^*\oplus \bZp $ if  \eqref{eq:ex-cp-nontriv-split} were true, a contradiction. Hence, the connecting map of \eqref{eq:ex-cp-nontriv-cof} should be non-trivial.
	\end{exam}
	Recall from Proposition  \ref{tree-mfld-const} $ (4) $ that for a non-root vertex $ v\in V(\T) $, the connected sum is performed at the point $ [0,0,1]\in \C P^2_v $. At the root vertex $ v_0 $ with weight $ (a_0,b_0;m) $ we may change the weights to $ (-a_0,b_0-a_0;m) $ or $ (a_0-b_0,-b_0;m) $ to obtain a $ 
	\GG $-homeomorphic manifold. 
	 This fact will be used in the proof of the theorems below.
	
	The following example summarizes how a reorientation is performed and the resulting modifications of weights. We also demonstrate how this allows us to make a judicious choice of cellular filtration of $  X(\T) $. 
	\begin{exam}\label{ex:reori}
		Suppose we have an admissible weighted  $ \GG $-equivariant tree $ \T $ depicted as in the left of the figure  below with root vertex $ v_0 $, $ w(v_0)=(a_0,b_0;m) $, and all other vertices  have weight as mentioned therein.
		 We reorient $ \T $ to obtain a new tree $ \T' $ whose root vertex is $ u_0=v_3 $ with $ w(u_0) =(a_3-b_3,-b_3;m)$, and let for $ i=1, 2, 3 $, the vertices $ u_i\in \T' $ represent the vertices $ v_{3-i} $. Proposition  \ref{tree-mfld-const} $ (4) $ tells us  that the connected sum is performed at $ [0,0,1]\in \C P^2_{v_3} $ for which
		 \[
		T_{[0,0,1]}\C P^2_{v_3}=\lambda^{a_3}+\lambda^{b_3}.
		\]
		This means there exists a $ \GG $-fixed point $ p $ in $\C P^2_{v_2} $ so that 
		\[ 
		T_p\C P^2_{v_2}= \lambda^{a_3}+\lambda^{-b_3} \mbox{ or } \lambda^{-a_3}+\lambda^{b_3}.
		 \]
		 Then, if necessary, we apply a suitable $ \GG $-homeomorphism to map the point $ p $ to $ [0,0,1] $, which allow us to perform the connected sum at the point $ [0,0,1] $ of $ \C P^2_{u_1} $ with $ \C P^2_{u_0} $. This explains the weights in the new tree $ \T' $. Note that $ X(\T) $ is $\GG$ homeomorphic to $ X(\T') $. \par
		 \addtocounter{equation}{1}
		 \begin{figure}[htp]
		 	\begin{tikzpicture}
		 		[
		 		scale=.45,
		 		dot/.style = {circle, gray, minimum size=#1,
		 			inner sep=0pt, outer sep=0pt}, 
		 		dot/.default = 3pt  
		 		] 
		 		
		 		\node [dot, label=left:\up{$v_0:w(v_0)=(a_0,b_0;m)$}] (a0) at (3,6)  {};  
		 		\draw[black] (3,6) circle[radius=3pt];
		 		\node [dot, label=left:\up{$v_1:w(v_1)=(a_1,b_1;m)$}](a1) at (1,4)  {};
		 		\draw[black] (1,4) circle[radius=3pt];
		 		\node [dot, label=left:\up{$v_2:w(v_2)=(a_2,b_2;m)$\;\;\;}] (a2) at (3,2) {};  
		 		\draw[black] (3,2) circle[radius=3pt];
		 		\node [dot, label=left:\up{$v_3:w(v_3)=(a_3,b_3;m)$}] (a3) at (1,0) {}; 
		 		\draw[black] (1,0) circle[radius=3pt];

		 		\draw(a0) --  (a1); 
		 		\draw (a1) -- (a2);  
		 		\draw (a2) -- (a3);  
		 	\end{tikzpicture}
		 	\hspace*{2.5cm}
		 	\begin{tikzpicture}  
		 		[
		 		scale=.45,
		 		dot/.style = {circle, gray, minimum size=#1,
		 			inner sep=0pt, outer sep=0pt}, 
		 		dot/.default = 3pt  
		 		]  
		 		
		 		\node [dot, label=right:\up{$u_3:w(u_3)=\pm(a_1,-b_1;m)$}] (a0) at (3,6)  {};  
		 		\draw[black] (3,6) circle[radius=3pt];
		 		
		 		\node [dot, label=right:\up{\;$u_2:w(u_2)=\pm(a_2,-b_2;m)$}](a1) at (1,4)  {};
		 		\draw[black] (1,4) circle[radius=3pt];
		 		
		 		\node [dot, label=right:\up{$u_1:w(u_1)=\pm(a_3,-b_3;m)$}] (a2) at (3,2) {};  
		 		\draw[black] (3,2) circle[radius=3pt];
		 		
		 		\node [dot, label=right:\up{$u_0:w(u_0)=(a_3-b_3,-b_3;m)$}] (a3) at (1,0) {}; 
		 		\draw[black] (1,0) circle[radius=3pt];

		 		\draw (a0) -- (a1); 
		 		\draw (a1) -- (a2);  
		 		\draw (a2) -- (a3);  
		 	\end{tikzpicture}
		 	\label{fig}
		 	\caption{An example of reorientation: The left tree has  root vertex  $ v_0 $ and the right one has  root vertex $ u_0=v_3 $.} 
		 \end{figure}\par
Suppose in the tree $ \T $,  $ m\nmid a_0, b_0, a_1-b_1, a_2-b_2 $ but $ m\mid a_3-b_3 $.	 For the $ \GG $-manifold $ X(\T) = \#_{i=0}^3\C P^2_{v_i}$ if we proceed as in the proof of Theorem \ref{thm:gen}, we obtain the cofibre sequence 
		\begin{equation*}
		\xymatrix@R=0.5cm@C=0.5cm{H\uZ\smas S^{\lambda^{a_3-b_3}} \ar[d]^{\simeq}\ar[r] 
			&H\uZ\smas X(\T) \ar[r] 
			& H\uZ\smas \#_{i=0}^2 \C P^2_{v_i}\ar[d]^{\simeq}\\ 
                 H\uZ\smas S^2 & & H\uZ\smas S^{2\lambda}\bigwedg_{i=0}^2 H\uZ\smas S^{\lambda}}
	    \end{equation*}
		Observe that the connecting map may be non-zero here. On the other hand, for $ \T' $, we obtain 
		\begin{myeq}\label{eq:reori-2}
		\xymatrix@R=0.5cm@C=0.5cm{H\uZ\smas S^{\lambda^{\pm(a_1+b_1)}} \ar[d]^{\simeq}\ar[r] 
			&H\uZ\smas X(\T') \ar[r] 
			& H\uZ\smas \#_{i=0}^2 \C P^2_{u_i}\ar[d]^{\simeq}\\ 
			H\uZ\smas S^{\lambda} & & H\uZ\smas S^{\lambda+2}\bigwedg_{i=0}^2 H\uZ\smas S^{\lambda}}
		\end{myeq}
The right vertical equivalence comes from Theorem \ref{thm:one-zero} and  the equivalence $ H\uZ\smas S^{\lambda^{a_i+b_i}}\simeq H\uZ\smas S^{\lambda} $  for $ i=1,2, 3 $. To see this, note from $ 7(a) $ of \ref{adm-wt-tree} that $$  \pm(a_1,b_1) \in \{(a_0,-b_0),(a_0-b_0,b_0), (a_0,b_0-a_0)\}. $$ Our condition $ m\nmid a_0, b_0, a_0-b_0 $ implies $ m\nmid a_1,b_1, a_1+b_1 $. Iterating this process the desired equivalence follows. Observe that, the connecting map in  \cofseq{} \eqref{eq:reori-2}   is trivial.\par
	\end{exam}

	We now proceed towards the decomposition result in the case $ \GG=C_p $. For the tree $ \T $, if $ p\nmid a_v-b_v $ for all vertices $ v \in \T_0$, the result is obtained from Theorem \ref{thm:gen} as 
	\[
	H\uZ\smas X(\T)_+ \simeq 
	H\uZ\bigwedg H\uZ \smas S^{\lambda^{a_0}+ \lambda^{b_0}}\bigwedg\limits_{\T_0} H\uZ \smas S^{\lambda}\bigwedg\limits_{ \T_e/\GG} { H\uZ \smas \GG/e}_+\smas {S^2}. 
	\]
	where $ \T_e=\{v\in \T\mid Stab(v)=e\} $. In the complementary situation $ p $ must divide $ a_v-b_v $ for some $ v\in \T_0 $. If further $ p\mid a_0 $ or $ b_0 $, we are in the situation dealt in Theorem \ref{thm:one-zero}, so that we have 
	\[
		H\uZ\smas X(\T)_+ \simeq 
	H\uZ\bigwedg H\uZ \smas S^{{\lambda}+ 2}\bigwedg\limits_{\phi(\T)} H\uZ \smas S^{\lambda}\bigwedg\limits_{\psi(\T)}H\uZ \smas {S^2}\bigwedg\limits_{ \T_e/\GG} { H\uZ \smas \GG/e}_+\smas {S^2}. 
	\] 
	where  $ \phi(\T)=\#\{v\in \T_0\mid p\nmid a_v-b_v\} $, and $ \psi(\T)=\#\{v\in \T_0\mid p\up{divides} a_v-b_v\} $. For the remaining case, we prove
	\begin{theorem}\label{thm:cp-case}
		Let  $ \GG=C_p $ and $\T $ be an admissible weighted  $ \GG $-equivariant tree of type I such that $ p\nmid a_0, b_0 $ but $ p\mid a_v-b_v $ for some $ v\in \T_0 $.  	Then,  
		\begin{equation*}
				H\uZ\smas X(\T)_+ \simeq 
				H\uZ\bigwedg H\uZ \smas S^{{\lambda}+ 2}\bigwedg\limits_{\phi(\T)+1} H\uZ \smas S^{\lambda}\bigwedg\limits_{\psi(\T)-1}H\uZ \smas {S^2}\bigwedg\limits_{ \T_e/\GG} { H\uZ \smas \GG/e}_+\smas {S^2}. 
		\end{equation*}
	\end{theorem}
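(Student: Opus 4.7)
The plan is to reduce the theorem to Theorem~\ref{thm:one-zero} via the reorientation technique illustrated in Example~\ref{ex:reori}. By hypothesis, pick a fixed vertex $v^* \in \T_0$ with $p \mid a_{v^*} - b_{v^*}$; the condition $\gcd(a_{v^*},b_{v^*},p)=1$ then forces $p \nmid a_{v^*}$ and $p \nmid b_{v^*}$. Using Proposition~\ref{prop-act}(2), we rewrite the weight at $v^*$ as $(a_{v^*} - b_{v^*}, -b_{v^*}; p)$ and then reorient $\T$ as in Example~\ref{ex:reori} to obtain an admissible weighted tree $\T'$ with root $u_0 = v^*$. The manifolds $X(\T)$ and $X(\T')$ are $\GG$-homeomorphic, and since the $C_p$-homeomorphism type of $\C P^2(a,b;p)$ depends on $(a,b)$ only modulo $p$, the reduced root weight of $\T'$ is $(0, -b_{v^*} \bmod p; p)$, meeting the hypothesis of Theorem~\ref{thm:one-zero}.

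Applying Theorem~\ref{thm:one-zero} to $\T'$ yields
\[
H\uZ \smas X(\T)_+ \simeq H\uZ \bigvee H\uZ \smas S^{\lambda+2} \bigvee_{\mu \in \T'_0} H\uZ \smas S^{\lambda^{a'_\mu - b'_\mu}} \bigvee_{[\mu] \in \T'_e/\GG} H\uZ \smas (\GG/e)_+ \smas S^{\lambda^{a'_\mu - b'_\mu}}.
\]
By Proposition~\ref{smash-rep}, each fixed-vertex summand $S^{\lambda^{a'_\mu - b'_\mu}}$ is equivalent to $S^\lambda$ when $p \nmid a'_\mu - b'_\mu$ and to $S^2$ when $p \mid a'_\mu - b'_\mu$; since the restriction of $\lambda$ to the trivial subgroup is trivial, every free-orbit summand simplifies uniformly to $(\GG/e)_+ \smas S^2$. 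Also, $\T'_e = \T_e$ as sets, so the free-orbit contributions already match the stated formula.

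To conclude, we verify $\phi(\T') = \phi(\T) + 1$ and $\psi(\T') = \psi(\T) - 1$. The vertex $v^*$ itself flips: as the new root, $a'_{u_0} - b'_{u_0} \equiv -b_{v^*} \not\equiv 0 \pmod p$, so $v^*$ moves from the $\psi$-count to the $\phi$-count. For any $v \in \T_0 \setminus \{v^*\}$ off the reorientation path from $v_0$ to $v^*$, the weight is unchanged and bad-status is trivially preserved. For $v$ on the reorientation path, the updated weight $(a'_v, b'_v)$ is determined by the admissibility rules (5) and (7)(a) of Definition~\ref{adm-wt-tree} applied to $v$'s new parent in $\T'$, and a direct case check (using $p \nmid a_0, b_0$ to exclude degenerate configurations) confirms that $p \mid a'_v - b'_v$ iff $p \mid a_v - b_v$.

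The main obstacle is the final combinatorial step. The admissibility-driven weight transformations along the reorientation path must be tracked vertex by vertex, and the hypothesis $p \nmid a_0, b_0$ is what propagates through the admissibility conditions to preclude exceptional configurations---such as simultaneously-bad consecutive vertices with mismatched transformations---that would otherwise break the count.
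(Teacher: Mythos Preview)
Your overall strategy---reorient so that a bad fixed vertex becomes the root and then invoke Theorem~\ref{thm:one-zero}---is the same as the paper's. The gap is in the final combinatorial verification. You assert that for each vertex $v$ on the reorientation path, $p \mid a'_v - b'_v$ iff $p \mid a_v - b_v$. This pointwise claim is false. Under the reorientation of Example~\ref{ex:reori}, the path vertex $v_j$ (for $0 \le j < \ell$) receives new weight $\pm(a_{j+1},-b_{j+1})$, so its new difference $a'_v-b'_v$ is $\pm(a_{j+1}+b_{j+1})$, which by condition 7(a) equals $\pm a_j$ or $\pm b_j$. Thus the new bad status of $v_j$ is governed by whether $p$ divides one of $a_j, b_j$, \emph{not} by whether $p \mid a_j - b_j$. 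For a concrete failure take $p=5$ and the linear tree with successive weights $(1,2),(1,1),(1,0),(1,4),(1,3),(3,3)$, choosing $v^*=v_5$: in $\T$ the vertex $v_1$ is bad and $v_2$ is not, but after reorientation $v_1$ becomes good and $v_2$ becomes bad.

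The \emph{counts} $\phi(\T'),\psi(\T')$ do come out as you want, but establishing this requires a matching argument between the sets $\{0\le j\le \ell-1 : p \mid a_j - b_j\}$ and $\{1\le i\le \ell : p \mid a_{i}+b_{i}\}$ along the path---this is exactly the Claim proved inside Theorem~\ref{cpnthm}---not a vertex-by-vertex equivalence. The paper sidesteps this entirely in the $C_p$ case by choosing $v^*$ to be the bad fixed vertex \emph{closest} to $v_0$. Then $p \nmid a_j - b_j$ for $0 \le j \le \ell-1$, and combining this with the hypothesis $p \nmid a_0,b_0$ one propagates $p \nmid a_j, b_j$ along the path via conditions (5) and 7(a), yielding $p \nmid a_{i}+b_{i}$ for $1\le i\le \ell$. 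Every reoriented path vertex is then automatically good, and the shift $\phi \mapsto \phi+1$, $\psi \mapsto \psi-1$ comes solely from flipping $v^*$ itself. A secondary point: the paper reorients only the path $\Gamma$ and then attaches the rest of $\T$ by the inductive cofibre argument of Theorem~\ref{thm:one-zero}, rather than reorienting the entire tree at once; your whole-tree reorientation also works, but you would additionally need to verify that $\T'$ remains admissible at the on-path vertices whose weights have changed.
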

	\begin{proof}
By given hypothesis,  there exist a vertex $ v_\ell\in \T_0 $ with $ w(v_\ell)=(a_\ell,b_\ell;p) $ such that $  a_\ell-b_\ell \equiv 0 \pmod p $.  We  further assume that $ v_\ell $ is closest to the root vertex $ v_0 $ in terms of number of edges from $ v_0 $ to $ v_\ell $. We  prove  the statement for  $\T_0\subseteq \T $. The result follows from this because  when  we attach a free orbit  to any level, the resulting  connecting map becomes a map of underlying non-equivariant spectra, which is trivial.\par
 Let $ \Gamma $ denote  the path from $ v_0 $ to $ v_\ell $ passing through vertices $ v_0, v_1,\dots ,v _\ell $, and let  for  $ v_i\in \Gamma $,  $w(v_i)= (a_i,b_i;p) $. We  reorient $ \Gamma $, as in Example \ref{ex:reori}, so that now $ v_\ell $ becomes the root vertex $ u_0 $; $ v_i\in \Gamma $ becomes the vertex $ u_{\ell-i} $.  Observe that  the weight at  $ u_{i}=v_{\ell-i} $ becomes $ \pm(a_{\ell-i+1},-b_{\ell-i+1};p) $ and the weight at $ u_0 $ is  $ (a_\ell-b_{\ell}, -b_\ell;p) $.  
		Since $ a_\ell-b_\ell \equiv 0 \pmod p$,  the choice of cellular filtration of $ \C P^2(a_\ell,b_\ell;p) $, as in Example \ref{exam:non-triv-conn}, leads to 
		\[  H\uZ \smas  \C P^2(a_\ell,b_\ell;p)\simeq H\uZ \smas S^{ \lambda+2}\bigwedg H\uZ \smas S^{\lambda}.\]
	Theorem \ref{thm:one-zero} implies 
\begin{align*}
H\uZ\smas X(\Gamma)	 & \simeq H\uZ\smas  S^{\lambda+2}\bigwedg H\uZ\smas 	{S^\lambda}\bigwedg_{i=1}^{\ell} H\uZ\smas {S^\lambda}^{a_{\ell-i+1}+b_{\ell-i+1}}\\
	&\simeq   H\uZ\smas  S^{\lambda+2}\bigwedg H\uZ\smas 	{S^\lambda}\bigwedg_\ell H\uZ\smas 	{S^\lambda}
\end{align*}
	To deduce the second equivalence we show that for $ i=1 $ to $ \ell $, $ p \nmid a_i+b_i $. 
	The condition  $ p\nmid  a_0 $, $ b_0,  a_0-b_0 $ in turn implies $ p\nmid  a_1,b_1, a_1+b_1 $ as  by $ 7(a) $ of  Definition \ref{adm-wt-tree}  $ \pm(a_1,b_1) \in \{(a_0,-b_0),(a_0-b_0,b_0), (a_0,b_0-a_0)\}$. The fact that for $ 1\le i\le \ell $, $ p\nmid a_{i-1}-b_{i-1} $, let us continue this process up to ${\ell-1}$. \par
Next we attach vertices of $ \T\setminus \Gamma  $ to $ \Gamma $	proceeding  by induction  as in Theorem $ \ref{thm:one-zero} $, and observe that in each step the connecting map for the \cofseq{} is null. This completes the proof.
	\end{proof}
	The rest of the section is devoted to proving homology decompositions in the case  $ \GG=C_{p^n} $. We start with the following observation 
	\begin{lemma}\label{lem:cpn-lem}
	Let $ \GG=C_{p^n} $	 and $\T $ be an admissible weighted   $ \GG $-equivariant tree of type I. Let $ \tau $ be the maximum power of $ p $ that divides $ a_v-b_v $ among the vertices of $ \T_0 $, and  $ p^\tau$ does not divide  $a_0$ and $ b_0 $. Then for $ v\in \T\setminus \T_0 $, $Stab(v)\le C_{p^\tau} $.
	\end{lemma}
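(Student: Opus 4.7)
The plan is to control the $p$-adic valuations of the weight entries along the fixed part $\T_0$, and then read off the size of the stabilizer at the unique edge where the path from the root to $v$ leaves $\T_0$. Writing $v_p(\cdot)$ for the $p$-adic valuation, the hypothesis says $v_p(a_0), v_p(b_0) < \tau$, while $\tau = \max\{\,v_p(a_u - b_u) : u \in \T_0\,\}$.

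The key step is the inductive claim that
\[ v_p(a_u) \le \tau,\qquad v_p(b_u) \le \tau,\qquad v_p(a_u - b_u) \le \tau \]
for every $u \in \T_0$. The base case $u = v_0$ is immediate from the hypothesis and the definition of $\tau$. For a fixed vertex $v_1$ at level one, condition (5) of Definition \ref{adm-wt-tree} forces $(a_{v_1}, b_{v_1})$ into the set $\{\pm(a_0, -b_0),\, \pm(a_0, b_0 - a_0),\, \pm(b_0, a_0 - b_0)\}$, so each coordinate lies in $\{\pm a_0, \pm b_0, \pm(a_0 - b_0)\}$, and the bound on $v_p(a_{v_1}), v_p(b_{v_1})$ follows. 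For $u \in \T_0$ at level at least two, let $u'$ be its parent in $\T_0$. Condition (7)(a) then gives $\pm(a_u, b_u) \in \{(a_{u'}, b_{u'} - a_{u'}),\, (a_{u'} - b_{u'}, b_{u'})\}$, so the entries of $(a_u, b_u)$ are again among $\pm a_{u'}, \pm b_{u'}, \pm(a_{u'} - b_{u'})$, and the induction hypothesis closes the step. The bound $v_p(a_u - b_u) \le \tau$ is part of the definition of $\tau$.

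Now let $v \in \T \setminus \T_0$, and trace the unique path $v_0 = x_0 \to x_1 \to \cdots \to x_k = v$. By condition (7), $m_{x_{i+1}} \mid m_{x_i}$, so the stabilizer orders are weakly decreasing; hence there is a unique index $j$ with $x_{j-1} \in \T_0$ and $x_j \notin \T_0$. Applying condition (7)(b) to the transition edge $x_{j-1} \to x_j$ yields $m_{x_j} = \gcd(x, p^n)$ for some $x \in \{a_{x_{j-1}},\, b_{x_{j-1}},\, a_{x_{j-1}} - b_{x_{j-1}}\}$. By the key step, $v_p(x) \le \tau$, whence $m_{x_j} \le p^\tau$. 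Since $m_v = m_{x_k}$ divides $m_{x_j}$, we conclude $Stab(v) = C_{m_v} \le C_{p^\tau}$.

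The one point needing care is the interface between levels $0$ and $1$: condition (7)(a) is explicitly excluded at the root, which is exactly why condition (5) is required as a separate base case for the induction. Beyond that, everything reduces to routine bookkeeping with $p$-adic valuations and a single invocation of condition (7)(b).
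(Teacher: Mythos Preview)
Your proof is correct and follows essentially the same route as the paper's: both arguments trace the $p$-adic valuations of the weight entries along the fixed part $\T_0$ using conditions (5) and (7)(a), and then invoke condition (7)(b) at the edge where the path from the root leaves $\T_0$. The paper organizes this as a contradiction argument walking upward from the transition vertex to the root, whereas you phrase it as a direct downward induction establishing the bound $v_p(a_u), v_p(b_u), v_p(a_u-b_u)\le\tau$ at every $u\in\T_0$; the mathematical content is identical.
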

\begin{proof}
	Choose  a vertex $v\in \T\setminus \T_0  $  such that $  Stab(v)$ is maximum among vertices of $ \T\setminus\T_0 $, and $ v $ is closest to the root vertex. 
	This implies if $ v $ is in level $ L $, then the vertex $ u $  in  level $ L-1 $ connected to $ v $ belongs to $ \T_0 $.   So let $ w(u)=(a_u,b_u,p^n) $. We claim that $m_v=Stab(v)\le C_{p^\tau} $.
	 On the contrary suppose
	$m_v> C_{p^\tau} $. From $ 
	7(b) $ of Definition \ref{adm-wt-tree}, we get 
	$ m_v$ divides one of $ a_u,b_u, a_u-b_u $. Since $ m_v$ can not divide $  a_u-b_u  $, $ m_v $ divides $ a_u $ or $ b_u $. Further, if $ u $ is connected to a level $ L-2$ vertex $ u' $  with $ w(u') =(a_{u'},b_{u'};p^n)$, then 
	\[ 
	\pm(a_u,b_u)\in\{(a_{u'},-b_{u'}), (a_{u'},b_{u'}-a_{u'}), (a_{u'}-b_{u'},b_{u'})\}.
	 \]
	 This means $ m_v$ divides $ a_{u'} $ or $ b_{u'} $. Continuing this process we end up with $ p^\tau $ dividing $ a_0 $ or $ b_0 $. Hence a contradiction.
\end{proof}
Proceeding as in the $ C_p $-case, we have that if $ p\nmid a_v-b_v $ for all $ v \in \T_0$, by Theorem \ref{thm:gen}
\[
	H\uZ\smas X(\T)_+\simeq H\uZ\bigwedg H\uZ \smas S^{{\lambda^{a_0}}+ \lambda^{b_0}}\bigwedg\limits_{\T_0} H\uZ \smas S^{\lambda}\bigwedg\limits_{{\T_d/\GG},~d\neq p^n}  H\uZ \smas {\GG/C_d}_+\smas S^{\lambda}.
\]
 In the complementary case $p  $  divides $ a_v-b_v $ for some $ v\in \T $. Let $ \tau>0 $ be the maximum power of $ p $ that divides $ a_v-b_v $ among the vertices of $ \T_0 $. If  further $ p^\tau\mid a_0$ or $ b_0 $, then proceeding exactly as in Theorem \ref{thm:one-zero} we obtain 
\begin{myeq}\label{eq:cpn-ref}
H\uZ\smas X(\T)_+\simeq H\uZ\bigwedg H\uZ \smas S^{{\lambda^{a_0}}+ \lambda^{b_0}}\bigwedg\limits_{Z_\T(i)} H\uZ \smas S^{\lambda^{i}}\bigwedg\limits_{{[\mu]\in \T_d/\GG,~ d\neq p^n}}  H\uZ \smas {\GG/C_d}_+\smas S^{\lambda^{a_\mu-b_\mu}}
\end{myeq}
where
 $
Z_\T(i):= \#\{v\in \T_0, w(v)=(a_v,b_v;p^n) \mid \gcd(a_v-b_v,p^n)=p^i \mbox{ where }  0\le i\le n \}. 
$\par
We also define
	\[
	W_{\T}(i)=\begin{cases}
		Z_\T(i) +1  & \mbox{ if } i=0 \\ 
		Z_\T(i)-1 & \mbox{ if } i=\tau\\
		Z_\T(i) & \mbox{ otherwise.}  \mbox{}\\	  
	\end{cases}
	\]
	Observe that the conditions on weights in Definition \ref{adm-wt-tree} does not change if we replace the weight $ (a_0,b_0;p^n) $ at the root vertex by one of $ (a_0-b_0,-b_0;p^n) $ or $ (b_0-a_0,-a_0;p^n) $. This allows us to further assume $ p\nmid a_0,b_0 $ in the theorem below.
	\begin{thm}\label{cpnthm}
		Let  $ \GG=C_{p^n}$ and $\T $ be an admissible  weighted  $ \GG $-equivariant  tree of type I.  Suppose $ \tau>0 $ is the maximum power of $ p $ that divides $ a_v-b_v $ among the vertices of $ \T_0 $ and  $ p\nmid a_0, b_0 $. Then  
		\begin{equation*}
			H\uZ\smas X(\T)_+\simeq	H\uZ\bigwedg H\uZ \smas S^{\lambda+\lambda^{p^{\tau}}}\bigwedg\limits_{W_{\T}(i)} H\uZ \smas S^{\lambda^{i}}\bigwedg\limits_{{[\mu]\in \T_d/\GG,~d\neq p^n}} H\uZ \smas {\GG/C_d}_+\smas S^{\lambda^{a_\mu-b_\mu}}. 
		\end{equation*}
	\end{thm}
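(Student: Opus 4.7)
The strategy extends the reorientation argument of Theorem \ref{thm:cp-case} to the prime-power setting. By hypothesis, fix $v_\ell \in \T_0$ realizing $\gcd(a_\ell - b_\ell, p^n) = p^\tau$ and chosen closest to the root $v_0$; let $\Gamma$ denote the path from $v_0$ to $v_\ell$. I would first reduce to handling $\T_0$: Lemma \ref{lem:cpn-lem} ensures that non-fixed vertices of $\T$ have stabilizer contained in $C_{p^\tau}$, and attaching each such orbit last yields a cofibre sequence \eqref{orb-conn-sum-cof} whose connecting maps vanish by Theorem \ref{thm:BG-C_m}, exactly as in the argument following \eqref{eq:ind-gen-sm-stab} in Theorem \ref{thm:gen}. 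Similarly, the fixed vertices in $\T_0 \setminus \Gamma$ will be attached inductively (level by level) after the main argument, each contributing an independent wedge summand $H\uZ \smas S^{\lambda^{a_v - b_v}}$ via the same vanishing.

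Next, reorient $\Gamma$ as in Example \ref{ex:reori} to obtain a $\GG$-homeomorphic tree $\T'$ whose root $u_0 = v_\ell$ has weight $(a_\ell - b_\ell, -b_\ell; p^n)$. Since $p \mid a_\ell - b_\ell$ combined with $\gcd(a_\ell, b_\ell, p^n) = 1$ forces $p \nmid a_\ell$ and $p \nmid b_\ell$, the cellular filtration $P(\lambda^{a_\ell - b_\ell} \oplus \lambda^{-b_\ell} \oplus 1_\C)$ on $\C P^2_{u_0}$ yields the cofibre sequence
\[
H\uZ \smas S^{\lambda^{a_\ell}} \to H\uZ \smas \C P^2_{u_0} \to H\uZ \smas S^{\lambda^{a_\ell - b_\ell} + \lambda^{-b_\ell}}.
\]
Writing $a_\ell - b_\ell = u p^\tau$ with $\gcd(u, p) = 1$ and invoking Proposition \ref{smash-rep}, the endpoints simplify to $H\uZ \smas S^\lambda$ and $H\uZ \smas S^{\lambda + \lambda^{p^\tau}}$. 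The connecting map is classified by $\pi^\GG_{\lambda^{p^\tau} - 1}(H\uZ)$; setting $\alpha = \lambda^{p^\tau} - 1$, one has $|\alpha| = 1$ odd and $|\alpha^{C_{p^k}}| = 1$ for $k \le \tau$, $-1$ for $k > \tau$, so the hypothesis of Theorem \ref{thm:BG-C_m} is satisfied and the group vanishes. I would then iteratively attach $u_1, \ldots, u_\ell$ along the reoriented path via \eqref{conn-sum-cof}; each step introduces a summand $H\uZ \smas S^{\lambda^{a_{u_i} - b_{u_i}}}$ provided the relevant connecting maps vanish by a similar application of Theorem \ref{thm:BG-C_m}.

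The main obstacle will be the combinatorial bookkeeping matching the resulting count of $H\uZ \smas S^{\lambda^i}$ summands to $W_\T(i)$. The vertex $v_\ell$ is absorbed into the top cell $S^{\lambda + \lambda^{p^\tau}}$, accounting for $W_\T(\tau) = Z_\T(\tau) - 1$, while the extra $H\uZ \smas S^\lambda$ arising from the bottom of the cellular filtration at $u_0$ accounts for $W_\T(0) = Z_\T(0) + 1$. For the intermediate vertices $u_i = v_{\ell - i}$ with $1 \le i \le \ell$, whose reoriented weights $\pm(a_{\ell - i + 1}, -b_{\ell - i + 1}; p^n)$ shift their differences to $\pm(a_{\ell - i + 1} + b_{\ell - i + 1})$, one uses condition $(7)(a)$ of Definition \ref{adm-wt-tree} to express this quantity as $\pm a_{\ell - i}$ or $\pm b_{\ell - i}$, and verifies via the minimality of $\ell$ together with the admissibility constraints $\gcd(a_v, b_v, p^n) = 1$ that the resulting $p$-adic valuations preserve the bucket counts $W_\T(i) = Z_\T(i)$ for $i \ne 0, \tau$.
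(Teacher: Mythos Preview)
Your overall architecture matches the paper's proof closely: choose $v_\ell$ realizing $p^\tau$, reorient the path $\Gamma$ to make $v_\ell$ the root, split $\C P^2_{u_0}$ via the filtration $P(\lambda^{a_\ell}\oplus 1_\C \oplus \lambda^{b_\ell})$, then attach $\T_0\setminus\Gamma$ and finally the non-fixed orbits using Lemma~\ref{lem:cpn-lem}. The vanishing arguments you sketch are correct.

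The genuine gap is in your last paragraph. You correctly observe via $(7)(a)$ that the reoriented difference $a_{u_i}-b_{u_i}=\pm(a_{\ell-i+1}+b_{\ell-i+1})$ equals $\pm a_{\ell-i}$ or $\pm b_{\ell-i}$, but this per-vertex identification does \emph{not} yield the bucket-count equality. Concretely: if $p\nmid a_j$ and $p\nmid b_j$, then $\nu_p(c_j)=0$ regardless of which of $a_j,b_j$ is selected, whereas $\nu_p(a_j-b_j)$ can be any value up to $\tau-1$. So there is no vertex-by-vertex matching of valuations. What the paper actually proves is a set-level bijection: for $s<\tau$,
\[
\#\{1\le i\le \ell : \nu_p(a_i+b_i)=s\}=\#\{0\le j\le \ell-1 : \nu_p(a_j-b_j)=s\},
\]
established by a path-tracing argument. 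Given $p^s\mid a_i+b_i$, condition $(7)(a)$ forces $p^s\mid a_{i-1}$ or $p^s\mid b_{i-1}$ (hence $p\nmid a_{i-1}-b_{i-1}$); iterating backwards along $\Gamma$, the divisibility eventually transfers from ``$p^s$ divides one of $a_q,b_q$'' to ``$p^s\mid a_j-b_j$'' at some earlier index $j$, and the hypothesis $p\nmid a_0,b_0$ guarantees termination. The reverse direction traces forward and uses $p\nmid a_\ell,b_\ell$. This bijection is the missing idea; your appeal to ``minimality of $\ell$ together with $\gcd(a_v,b_v,p^n)=1$'' is not enough to recover it.
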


	\begin{proof}
	 Let $ v_\ell \in \T_0$ with $ w(v_\ell)=(a_\ell,b_\ell;p^n) $ be a vertex for which  $ p^\tau\mid a_\ell-b_\ell $, and $ v_\ell $ is closest to the root in terms of number of edges. Let $ \Gamma $ denote the path from $ v_0 $ to $ v_\ell $ passing through the vertices $ v_0, v_1, \dots, v_\ell $ with $ w(v_i) =(a_i,b_i;p^n)$. We first compute $ H\uZ\smas X(\Gamma) $, then $H\uZ\smas X(\T_0)$  and finally $ H\uZ\smas X(\T) $.\par 
	We reorient $ \Gamma $ so that now $v_\ell  $ becomes the root vertex $ u_0 $, and $ v_{\ell-i} $ becomes the vertex $ u_{i} $. Observe that  the  weight at $ u_{i}=v_{\ell-i} $ becomes $ \pm(a_{\ell-i+1},-b_{\ell-i+1};p^n) $ and the weight at $ u_0 $ is $ (a_\ell-b_\ell ,-b_\ell;p^n) $. Since  $\gcd(a_\ell-b_\ell,p^n)=p^\tau$,
		 \begin{myeq}\label{eq:cpn-reori-root}
		  {H\uZ\smas  S^\lambda}^{a_\ell-b_\ell} \simeq { H\uZ\smas  S^{\lambda^{p^\tau}}}.
		 \end{myeq}
		 We use  the following cofibre sequence for $ \C P^2(a_\ell,b_\ell;p^n) $
		 \[ 
		 S^{\lambda^{a_\ell}} \to  \C P^2(a_\ell,b_\ell;p^n)\to S^{\lambda^{a_\ell-b_\ell}+\lambda^{-b_\ell}}.
		 \]
		This together with the identification \eqref{eq:cpn-reori-root}  leads to the following decomposition
		\begin{equation*}
			H\uZ\smas   \C P^2(a_\ell,b_\ell;p^n)_+\simeq H\uZ\bigwedg H\uZ\smas  S^{\lambda+\lambda^{p^\tau}}\bigwedg H\uZ\smas 	{S^\lambda}.
		\end{equation*}
Proceeding as in the proof of Theorem \ref{thm:cp-case} we see that for $ i=0 $ to $ \ell-1 $, $ p^\tau\nmid a_{i+1}+b_{i+1} $. 
Now \eqref{eq:cpn-ref}  implies 
\begin{align*}
	H\uZ\smas X(\Gamma)&\simeq  H\uZ\smas  S^{\lambda+\lambda^{p^\tau}}\bigwedg H\uZ\smas 	{S^\lambda} \bigwedg_{i=1}^{\ell} H\uZ\smas {S^\lambda}^{a_{\ell-i+1}+b_{\ell-i+1}}\\
	&\simeq H\uZ\smas  S^{\lambda+\lambda^{p^\tau}} \bigwedg\limits_{W_{\Gamma}(i)} H\uZ \smas S^{\lambda^{i}},
\end{align*}
The last equivalence is obtained from Proposition \ref{smash-rep} and the following claim\par 
\textbf{Claim:}\label{lem:card-eq}
 	Given $ \tau  $  and $ \Gamma $ as above, we have for $ s<\tau $
 	\[
 	\#\{1\le i\le \ell\mid \nu_p(a_i+b_i)=s\}=	
 	\#\{0\le j\le \ell-1\mid \nu_p(a_j-b_j)=s\},
 	\]
 	where $ \nu_p(r)=\mbox{max} \{ k\mid p^k \up{divides}  r\} $ is the $ p $-adic valuation.

{\it Proof of the claim.}
	Suppose $ p^s $ divides $ a_i+b_i $ for some  $ 1\le i\le \ell $ and $ v_{i-1} $ is not the root vertex. Applying $ 7(a) $  of Definition \ref{adm-wt-tree}, we see  
	  $ p^s $ divides $ a_{i-1}$ or $b_{i-1} $ (this implies $p\nmid  a_{i-1}-b_{i-1}$). Going one step further we see $ p^s $ divides one of $ a_{i-2}-b_{i-2},  a_{i-2} \up{or} b_{i-2} $, and the other two are relatively prime to $ p $. Since $ p\nmid a_0,b_0 $, continuing this process we end up with $ p^s$ dividing  $a_j-b_j $ for  some $ 0\le j< i-1 $ and $ p\nmid a_q-b_q $ or $ a_q+b_q $ for $ j<q\le i-1 $. If $ p^s\mid a_1+b_1 $, we must have $ p^s\mid a_0-b_0 $ by $ (5) $ of  \ref{adm-wt-tree} and the fact that $ p\nmid a_0,b_0 $. Therefore, the left-hand side is less than or equal to right-hand side.\par
	In the reverse direction, suppose  $ p^s \mid a_j-b_j $ for some $ 0\le j\le \ell-1 $ and $ v_j $ is not the root vertex. Then by $ 7(a) $ of Definition \ref{adm-wt-tree}, we see  $p^s $ divides one of $a_{j+1}, b_{j+1}   $, and hence, $p$ divides neither of $a_{j+1}+b_{j+1}$ or $a_{j+1}-b_{j+1}$. Continuing further we see that $ p^s $ divides one of $ a_{j+2}+b_{j+2},  a_{j+2} \up{or} b_{j+2} $.  Since $ p\mid a_\ell-b_\ell  $, $ p\nmid a_\ell,b_\ell $. Thus iterating this process  we see  $ p^s$  divides $ a_{i}+b_{i} $ for some $ j+1< i<\ell $ and $ p\nmid a_q+b_q $ or $ a_q-b_q $ for $ j+1\le q< i $. If $ p^s\mid a_0-b_0 $, then by $ (5) $ of Definition \ref{adm-wt-tree}, $ p^s $ divides one $ a_1+b_1,a_1 $ or $ b_1 $. If $ p^s\nmid a_1+b_1 $, then continuing one step further we see $ p^s $ divides one of $ a_2+b_2 , a_2$ or $ b_2 $. Iterating this way we obtain the required. This completes the proof of the claim.\par 
 Next we  attach vertices of $ \T_0\setminus \Gamma $ to $ \Gamma $  proceeding by induction as in the proof of Theorem \ref{thm:one-zero}, except the fact  that now levels are defined according to the distance from $ \Gamma $ instead of the root vertex.  For the inductive step suppose the statement holds for the tree up to level $ L $, $ \T_0(L) $ and we adjoin an orbit $ \OO_x $ containing a  level $ L+1 $-vertex $ v_x $ to $ \T_0(L) $. Suppose  $ w(v_x)=(a_x,b_x;p^n) $.  By Proposition \ref{smash-rep}, we may write  $ H\uZ\smas S^{\lambda^{a_x-b_x}}\simeq H\uZ\smas S^{\lambda^{p^t}} $ for some $ 0\le t<  \tau $. Then we obtain the following cofibre sequence of $ H\uZ $-modules after applying the induction hypothesis
\[
 H\uZ\smas S^{\lambda^{p^t}} \to  H\uZ\smas  X(\T(L)+ \OO_x)\to  H\uZ \smas S^{\lambda+\lambda^{p^{\tau}}}\bigwedg\limits_{W_{\T(L)}(i)} H\uZ \smas S^{\lambda^{i}}.
 \]
  The fact that $ p^\tau $ is the highest power ensures the connecting maps are zero by Theorem \ref{thm:BG-C_m}. Hence the above cofibre sequence splits and the required homology decomposition is obtained.\par
 Finally, to complete the proof, we adjoin vertices of $ \T\setminus \T_0 $ to $ \T_0 $, i.e., vertices on which $ \GG $ acts non-trivially. Again we proceed by induction on levels where levels are defined according to the distance from $ \T_0 $. Assume the statement holds for the tree up to level $ L' $, $ \T(L') $ and we attach an orbit $ \OO_{y} $ of the level $ L'+1 $-vertices  to $ \T(L') $.  Suppose for $ v_{y}\in \OO_{y} $, $ w(v_{y})=(a_y,b_y;C_{m_y}) $, and  $ H\uZ\smas S^{\lambda^{a_y-b_y}}\simeq H\uZ\smas S^{\lambda^{p^{t'}}} $ for some $ 0\le t'\le n $.  Proceeding along the lines of Theorem \ref{thm:gen}, we obtain the following cofibre sequence of $ H\uZ $-modules
 	\begin{myeq}\label{eq:cpn-ind-nontriv}
 		H\uZ\smas \GG/{C_{m_y}} _+\smas {S^\lambda}^{p^{t'}}
 		\to  H\uZ\smas  X\big(\T(L')+ \OO_y\big)_+
 		\to H\uZ\smas X(\T(L'))_+
 	\end{myeq}
 where by the induction hypothesis 
 \[H\uZ\smas  X(\T(L'))_+\simeq H\uZ\smas H\uZ \smas S^{\lambda+ \lambda^{p^\tau}}\bigwedg\limits_{W_{\T}(i)} H\uZ \smas S^{\lambda^{i}}\bigwedg_{[\mu]\in\T(L)_d/\GG} { H\uZ \smas \GG/C_d}_+\smas {S^{\lambda^{a_\mu-b_\mu}}}. \]
 The connecting map 
 \[ H\uZ \smas S^{\lambda+ \lambda^{p^\tau}}\to H\uZ\smas \GG/{C_{m_y}} _+\smas S^{\lambda^{p^{t'}}+1 }
  \]
   is classified up to homotopy by $ \pi_0^{C_{m_y}} (H\uZ\smas S^{\lambda^{p^{t'}}+1-\lambda-\lambda^{p^\tau}})$. Since  Lemma \ref{lem:cpn-lem} asserts that  $ C_{m_y}\le C_{p^\tau} $, the above homotopy group reduces to $ \pi_0^{C_{m_y}} (H\uZ\smas S^{\lambda^{p^{t'}}-\lambda-1})$, which is trivial by Theorem \ref{thm:BG-C_m}. Analogously all other connecting maps can be seen to be zero. Hence the cofibre sequence \eqref{eq:cpn-ind-nontriv} splits and we obtain the required decomposition.
\end{proof}


\begin{thebibliography}{10}

\bibitem{BG19}
{\sc S.~Basu and S.~Ghosh}, {\em Computations in {$C_{pq}$}-{B}redon
  cohomology}, Math. Z., 293 (2019), pp.~1443--1487.

\bibitem{BG20}
\leavevmode\vrule height 2pt depth -1.6pt width 23pt, {\em Equivariant
  cohomology for cyclic groups of square-free order}, 2020.
\newblock available at \url{https://arxiv.org/abs/2006.09669}.

\bibitem{BG21a}
\leavevmode\vrule height 2pt depth -1.6pt width 23pt, {\em Bredon cohomology of
  finite dimensional {$C_p$}-spaces}, Homology Homotopy Appl., 23 (2021),
  pp.~33--57.

\bibitem{BG21}
\leavevmode\vrule height 2pt depth -1.6pt width 23pt, {\em Non-trivial
  extensions in equivariant cohomology with constant coefficients}, 2021.
\newblock available at \url{https://arxiv.org/abs/2108.12763}.

\bibitem{Dre72}
{\sc A.~W.~M. Dress}, {\em Contributions to the theory of induced
  representations},  (1973), pp.~183--240. Lecture Notes in Math., Vol. 342.

\bibitem{Fer99}
{\sc K.~K. Ferland}, {\em On the {RO}({G})-graded equivariant ordinary
  cohomology of generalized {G}-cell complexes for {G} = {Z}/p}, ProQuest LLC,
  Ann Arbor, MI, 1999.
\newblock Thesis (Ph.D.)--Syracuse University.

\bibitem{FL04}
{\sc K.~K. Ferland and L.~G. Lewis, Jr.}, {\em The {$R{\rm O}(G)$}-graded
  equivariant ordinary homology of {$G$}-cell complexes with even-dimensional
  cells for {$G={\Bbb Z}/p$}}, Mem. Amer. Math. Soc., 167 (2004), pp.~viii+129.

\bibitem{GM95}
{\sc J.~P.~C. Greenlees and J.~P. May}, {\em Equivariant stable homotopy
  theory}, in Handbook of algebraic topology, North-Holland, Amsterdam, 1995,
  pp.~277--323.

\bibitem{HT04}
{\sc I.~Hambleton and M.~Tanase}, {\em Permutations, isotropy and smooth cyclic
  group actions on definite 4-manifolds}, Geom. Topol., 8 (2004), pp.~475--509.

\bibitem{Haz21}
{\sc C.~Hazel}, {\em The {$RO(C_2)$}-graded cohomology of {$C_2$}-surfaces in
  {$\Bbb Z/2$}-coefficients}, Math. Z., 297 (2021), pp.~961--996.

\bibitem{Kro10}
{\sc W.~C. Kronholm}, {\em A freeness theorem for {$RO(\Bbb Z/2)$}-graded
  cohomology}, Topology Appl., 157 (2010), pp.~902--915.

\bibitem{Lew88}
{\sc L.~G. Lewis, Jr.}, {\em The {$R{\rm O}(G)$}-graded equivariant ordinary
  cohomology of complex projective spaces with linear {${\bf Z}/p$} actions},
  in Algebraic topology and transformation groups ({G}\"ottingen, 1987),
  vol.~1361 of Lecture Notes in Math., Springer, Berlin, 1988, pp.~53--122.

\bibitem{Lew92}
\leavevmode\vrule height 2pt depth -1.6pt width 23pt, {\em The equivariant
  {H}urewicz map}, Trans. Amer. Math. Soc., 329 (1992), pp.~433--472.

\bibitem{MM02}
{\sc M.~A. Mandell and J.~P. May}, {\em Equivariant orthogonal spectra and
  {$S$}-modules}, Mem. Amer. Math. Soc., 159 (2002), pp.~x+108.

\bibitem{May20}
{\sc C.~May}, {\em A structure theorem for {$RO(C_2)$}-graded {B}redon
  cohomology}, Algebr. Geom. Topol., 20 (2020), pp.~1691--1728.

\bibitem{May96}
{\sc J.~P. May}, {\em Equivariant homotopy and cohomology theory}, vol.~91 of
  CBMS Regional Conference Series in Mathematics, Published for the Conference
  Board of the Mathematical Sciences, Washington, DC; by the American
  Mathematical Society, Providence, RI, 1996.
\newblock With contributions by M. Cole, G. Comeza\~na, S. Costenoble, A. D.
  Elmendorf, J. P. C. Greenlees, L. G. Lewis, Jr., R. J. Piacenza, G.
  Triantafillou, and S. Waner.

\bibitem{Yos83}
{\sc T.~Yoshida}, {\em On {$G$}-functors. {II}. {H}ecke operators and
  {$G$}-functors}, J. Math. Soc. Japan, 35 (1983), pp.~179--190.

\end{thebibliography}
\end{document}